\theoremstyle{plain}
\newtheorem{theorem}{Theorem}[section]
\newtheorem{proposition}[theorem]{Proposition}
\newtheorem{lemma}[theorem]{Lemma}
\newtheorem{corollary}[theorem]{Corollary}
\theoremstyle{definition}
\newtheorem{definition}[theorem]{Definition}
\newtheorem{example}[theorem]{Example}
\newtheorem{remark}[theorem]{Remark}
\numberwithin{equation}{section}
\def\L{\mathsf{L}}
\def\U{\mathsf{U}}
\def\LL{\mathcal{L}}
\def\UU{\mathcal{U}}
\def\SS{\mathcal{S}}
\def\TT{\mathcal{T}}
\setlist[itemize]{label=$\circ$, itemsep=1pt, topsep=2pt}
\setlist[enumerate]{itemsep=-5pt, topsep=2pt}
\title{Relational correspondences for $L$-fuzzy rough approximations defined on De~Morgan Heyting algebras}
\author{Jouni J{\"a}rvinen%
\thanks{These authors contributed equally to this work}\,
\footnote{Corresponding author}
\\
{\small Software Engineering, LUT School of Engineering Science}\\
{\small Mukkulankatu 19, 15210 Lahti, Finland}\\
\small{\tt jouni.jarvinen@lut.fi}
\and
Michiro Kondo$^{\ast}$\\
\small{Department of Mathematics, School of System Design and Technology}\\
\small{Tokyo Denki University, Senju Asahi-cho 5, Adachi, Tokyo, Japan}\\
\small{\tt mkondo@mail.dendai.ac.jp}
}
\date{}
\begin{document}
\maketitle

\begin{abstract}
We consider fuzzy rough sets defined on De Morgan Heyting algebras.
We present a theorem that can be used to obtain several correspondence
results between fuzzy rough sets and fuzzy relations defining them.
We characterize fuzzy rough approximation operators corresponding to 
compositions of reflexive, transitive, mediate, Euclidean and adjoint fuzzy relations defined on De~Morgan Heyting algebras by using only a 
single axiom.

\smallskip\noindent%
\small{\textbf{Keywords:} rough set, $L$-fuzzy set, $L$-fuzzy relation, relational correspondence, De~Morgan Heyting algebra}
\end{abstract}

\section{Introduction}

Rough sets were introduced by Z.~Pawlak in \cite{Pawlak82} to deal with concepts that cannot be defined precisely in terms of our knowledge. In rough set theory, knowledge about objects $U$ is given in terms of an \emph{indistinguishability relation} $E$, which is an equivalence $E$ on $U$ interpreted so that two elements
are $E$-related if we cannot distinguish them in terms of the knowledge $E$.
For each subset, $X$ the \emph{lower approximation} $X_E$ is the set of 
elements whose $E$-equivalence class is included in $X$. 
The set $X_E$ is interpreted as the set of elements that certainly belong to $X$
in view of the knowledge $E$. The \emph{upper approximation}
$X^E$ consists of elements whose $E$-class intersects with $X$.  
The set $X^E$ can be seen as the set of elements which possibly belong to 
$X$. This means that every vague concept can be approximated from below and 
above  by two sets that are definable by the knowledge $E$.

In the literature, numerous studies can be found in which equivalences are replaced by an arbitrary binary relation reflecting, for instance, similarity or preference between the elements; see \cite{Orlowska1998, Yao96}, for example. 
Also in such a generalized setting, the lower approximation $X_R$ consists of 
elements which necessarily are in $X$ and $X^R$ is the set of elements which possibly are in $X$, in the view on knowledge $R$.

Fuzzy sets introduced by L.~Zadeh  \cite{Zadeh65} are generalizations of traditional
sets such that the set-memberships are expressed by a real number of the 
interval $[0,1]$. Similarly, Zadeh defined fuzzy relations as generalizations 
of traditional binary relations. Soon after Zadeh's paper, J.~A.~Goguen
\cite{Goguen1967} defined so-called $L$-fuzzy sets and $L$-fuzzy relations, 
in which the $[0,1]$-interval is replaced by a complete lattice $L$ having 
$0$ as the smallest and $1$ as the greatest elements. Since then, all kinds
of structures, such as Heyting algebras or residuated lattices, have been
presented as a basis for fuzzy sets \cite{Pavelka1979}.

The first approach to integrate rough set theory and fuzzy set theory is
the paper by  D.~Dubois and H.~Prade \cite{DuboisPrade1990}, where
they introduced fuzzy rough sets. A comprehensive evaluation of the most 
relevant fuzzy  rough set models proposed in the literature can be found in
\cite{Deer2015}.

Correspondences are understood as conditions that connect the properties of
relations to the properties of approximation operations. For instance,
if $R$ is an arbitrary binary relation on $U$, then $R$ is symmetric if
and only if $(X_R)^R \subseteq X \subseteq (X^R)_R$ holds for all subsets
$X$ of $U$. Correspondences for binary relations are well-studied in the literature. For instance,
reflexive, symmetric, and transitive relations are characterized in
\cite{Jarvinen2005}. In \cite{Yao96}, the authors considered serial,
reflexive, symmetric, transitive and Euclidean relations. 
Correspondences for serial, reflexive, mediate, transitive and alliance relations are give in \cite{Zhu2007}. However, there seems to be a problem in
the characterization of alliance relations, and we consider this issue
in Example~\ref{exa:alliance}.

As noted above, there are several ways to generalize rough sets to
fuzzy rough sets. This means that there are various types of correspondence
results in the literature depending on the setting. 
For instance, in \cite{radzikowska2004fuzzy} the authors
considered $L$-fuzzy rough approximations in the case $L$ is a
complete residuated lattice. They characterized serial, reflexive,
symmetric, Euclidean, and transitive $L$-fuzzy relations.
In \cite{WuZang2004}, the authors presented correspondences 
for reflexive, symmetric, transitive, and Euclidean $[0,1]$-fuzzy relations. 
Similar study of $[0,1]$-fuzzy rough sets can be found in \cite{Liu08}.
Some other recent types of fuzzy rough approximations and their correspondences
are considered in \cite{Jin23, Sun20, Sun23, Wei21, Xu23, Zhao21}, for example.

In \cite{Pang2019}, the authors consider such $L$-fuzzy relations that $L$ is a Heyting algebra provided with an antitone involution $'$. This 
means that $L$ is a so-called De~Morgan Heyting algebra.
Three new types (mediate, Euclidean, adjoint) of $L$-fuzzy rough approximation operators were characterized in that paper.
In this work, we adopt their approach. We will present a new uniform method in Theorem~\ref{thm:corr_gene} which
covers the results in \cite{Pang2019}, and several others.

L.~Liu \cite{Liu13} initiated the study of characterizing fuzzy rough set approximations by only one axiom. In \cite{Pang2019}, the authors provided an axiomatic approach to $L$-fuzzy rough approximation operators, where $L$
is a De~Morgan Heyting algebra. They also presented
single axioms to characterize $L$-fuzzy rough approximation operators corresponding to mediate, Euclidean, adjoint $L$-fuzzy relations as well as their compositions. In that work, they gave the following open problem: 
``Using single axioms to characterize $L$-fuzzy rough approximation operators corresponding to compositions
of serial, reflexive, symmetric, transitive, mediate, Euclidean and adjoint $L$-fuzzy relations.'' In this work, we present a general solution to this
problem, which covers also other types of relations and their compositions.

The paper is structured as follows. In Section~\ref{Sec:Preliminaries}
we recall from the literature notions and basic results used in this 
work. More precisely, in Section~\ref{subsec:crisp_relations}, we consider
the essential correspondence results related to (crisp)
binary relations and rough approximation operations. In this
work, we consider fuzzy rough sets on De~Morgan Heyting algebras, and Section~\ref{subsec:demorgan_heyting} covers the
basic facts of these types of algebras.

Section~\ref{sec:correspondences} is devoted to the correspondence results characterizing properties of $L$-fuzzy relations in terms of fuzzy rough approximation operators.
In Section~\ref{subsec:general} we present a result which is frequently
used in other subsections to prove correspondences. We present
correspondence for reflexive and symmetric (Section~\ref{subsec:refl_symm}),
transitive and mediate (Section~\ref{subsec:trans_mediate}), 
Euclidean and adjoint (Section~\ref{subsec:EucAdj}), and functional and positive alliance relations 
(Section~\ref{subsec:func_alliance}). There are
several ways to generalize serial relations to their fuzzy counterparts, and
in Section~\ref{subsec:serial} we consider three such generalizations.

Finally, in Section~\ref{Sec:Axiomatization} we solve the problem
given in \cite{Pang2019} and some concluding remarks end the work.

\section{Preliminaries} \label{Sec:Preliminaries}

In this section, we first recall correspondences between (crisp) binary 
relations and  rough approximation operations. 
The second part of this section considers
De~Morgan Heyting algebras and fuzzy rough approximation operations defined
on them.

\subsection{Binary relations and rough approximation operators}
\label{subsec:crisp_relations}

A \emph{binary relation} on $U$ is a set of ordered pairs $(x, y)$, 
where $x$ and $y$ are elements of $U$. If $x$ and $y$ are $R$-related,
this is denoted by $x \, R \, y$ or $(x,y) \in R$.

Let us introduce some general properties of binary relations. A binary relation $R$ on $U$ is
\begin{enumerate}[label = {(\roman*)}]
\item \emph{serial}, if for all $x \in U$, there is $y \in U$ such that $x \, R \, y$, 
\item \emph{reflexive}, if $x \, R \, x$ for all $x \in U$,
\item \emph{symmetric}, if $x \,R \, y$, then $y \, R \, x$ for all $x,y \in U$,
\item \emph{transitive}, if $x \, R \, y$ and $y \, R \, z$, then $x \, R \, z$ for all $x,y,z \in U$,
\item \emph{mediate}, if $x \, R \, y$ for some $x,y \in U$, there is $z \in U$ such that $x \, R \, z$ and $z \, R \, y$,
\item \emph{Euclidean}, if $x \, R \, y$ and $x \, R \, z$, then $y \, R \, z$ for all $x,y,z \in U$.
\end{enumerate}

Next, we recall (see e.g.~\cite{Jarvinen2005, Yao96}) rough approximation operations
defined by arbitrary relations. For any subset $X \subseteq U$, the 
\emph{lower approximation} of $X$ is defined as
\[ X_R = \{ x \in U \mid x \, R \, y \mbox{ implies } y \in X \}. \]
The \emph{upper approximation} of $X$ is 
\[ X^R = \{ x \in U \mid \emph{there exists $y \in X$ such that $x \, R \, y$ }\}. \]
The following correspondence results can be found in the literature; see \cite{Jarvinen2005,Yao96,Zhu2007}, for instance:
\begin{enumerate}[label = {(\roman*)}]
\item $R$ is serial $\iff U^R = U \iff \emptyset_R = \emptyset \iff (\forall X \subseteq U) \, X_R \subseteq X^R$,
\item $R$ is reflexive $\iff  (\forall X \subseteq U) \, X_R \subseteq X \iff (\forall X \subseteq U) \, X \subseteq X^R$,
\item $R$ is symmetric $\iff  (\forall X \subseteq U) \, (X_R)^R \subseteq X \iff (\forall X \subseteq U) \, X \subseteq (X^R)_R$,
\item $R$ is transitive $\iff (\forall X \subseteq U) \,
(X^R)^R \subseteq X^R \iff (\forall X \subseteq U) \,
X_R \subseteq (X_R)_R$.
\item $R$ is mediate $\iff (\forall X \subseteq U) \,
X^R \subseteq (X^R)^R \iff  (\forall X \subseteq U) \,
(X_R)_R \subseteq X_R$.
\item $R$ is Euclidean $\iff (\forall X \subseteq U) \,
X^R \subseteq (X^R)_R \iff  (\forall X \subseteq U) \,
(X_R)^R \subseteq X_R$.
\end{enumerate}
In this work, we consider similar correspondences in the setting of 
$L$-fuzzy rough sets, when a complete De~Morgan Heyting algebra
is defined on $L$.

\subsection{Approximation operators in De~Morgan Heyting algebras} 
\label{subsec:demorgan_heyting}

We begin by recalling some definitions from \cite{Castano11}.
A \emph{Heyting algebra} is an algebra 
\[ (H, \vee, \wedge, \Rightarrow, 0, 1)\] 
of type $(2,2,2,0,0)$ for which
$(H, \vee, \wedge, 0,1)$ is a bounded distributive lattice and $\Rightarrow$ is the operation of \emph{relative pseudocomplementation},
that is, for $a,b, c \in H$, $a \wedge c \leq b$ iff $c \leq a \Rightarrow b$.

A \emph{De~Morgan Heyting algebra} is an algebra 
\[ \mathbf{L} = (L, \vee, \wedge, \Rightarrow , ', 0, 1)
\] 
of type $(2, 2, 2, 1, 0, 0)$ such that 
$(L, \vee, \wedge, \Rightarrow, 0, 1)$ is a Heyting algebra and $'$ is an antitone involution, that is, 
$(x \wedge y)' = x' \vee y'$ and $x'' = x$ for all $x \in L$.

\begin{example} Let us consider the 4-element lattice 
$L := 0 < a,b < 1$ in which the elements $a$ and $b$ are incomparable. Then
$(L, \vee, \wedge, \Rightarrow, 0, 1)$ is a well-known Heyting algebra.

There are two ways to define the operation $'$ in $L$.
We can define 
\begin{center}
$0' = 1, \ a' = b, \ b' = a, \ 1' = 0$.
\end{center}
It is also possible to define
\begin{center}
$0' = 1, \ a' = a, \ b' = b, \ 1' = 0$.
\end{center}
In both cases, we have a De~Morgan Heyting algebra defined on $L$.
\end{example}

A complete lattice $L$ satisfies the \emph{join-infinite distributive law} if for any $S \subseteq L$ and $x \in L$,
\begin{equation*}\label{Eq:JID} \tag{JID}
x \wedge \big ( \bigvee S \big ) = \bigvee \{ x \wedge y \mid y \in S \}.
\end{equation*}
The dual condition is the \emph{meet-infinite distributive law}, (MID).  It is well known that a complete lattice defines a Heyting algebra
if and only if it satisfies (JID). Each De~Morgan Heyting algebra $\mathbf{L}$ defined on a complete lattice $L$ satisfies both (JID) and (MID),
because $'$ is an order-isomorphism between $(L,\leq)$ and its dual $(L,\geq)$. In this work, De~Morgan Heyting algebras defined on a complete lattice
are called \emph{complete De~Morgan Heyting algebras}.

Let $\mathbf{L}$ be a complete De~Morgan Heyting algebra and $U$ a universe. An \emph{$L$-fuzzy set} on $U$ is a mapping $A \colon U \to L$.
We often drop the word `fuzzy' and speak about $L$-sets. The family of all $L$-sets on $U$ is denoted by $\mathcal{F}_L(U)$.

The set $\mathcal{F}_L(U)$ may be ordered \textit{pointwise} by setting for $A,B \in \mathcal{F}_L(U)$, $A \leq B$ if and only if  $A(x) \leq B(x)$ for all $x \in U$.
If $\mathbf{L}$ is a complete De~Morgan Heyting algebra, then $\mathcal{F}_L(U)$ forms a complete De~Morgan Heyting algebra such that for all
$\{A_i\}_{i \in I} \subseteq \mathcal{F}_L(U)$ and $x \in U$,
\[
\big ({\bigvee}_i A_i \big ) (x)  = {\bigvee}_i  A_i(x) \quad \text{ and } \quad
\big ({\bigwedge}_i A_i \big ) (x)  = {\bigwedge}_i  A_i(x)
\]
The operations $\Rightarrow$ and $'$ are defined for $A,B \in \mathcal{F}_L(U)$ and $x \in U$ by
\[ 
(A \Rightarrow B)(x) = A(x) \Rightarrow B(x) \qquad  \text{ and } \qquad A'(x) = A(x)' .
\] 
The map $\mathbf{0} \colon x \mapsto 0$ is the smallest and  $\mathbf{1} \colon x \mapsto 1$ is the greatest element of $\mathcal{F}_L(U)$, respectively.

An \emph{$L$-fuzzy relation $R$ on $U$} is a mapping $U \times U \to L$. We often use the term ``$L$-relation'' instead of ``$L$-fuzzy relation''.
The following definition of $L$-approximations can be found in \cite{Pang2019}.

\begin{definition}
Let $\mathbf{L}$ be a complete De~Morgan Heyting algebra, $R$ an $L$-relation on $U$ and $A \in \mathcal{F}_L(U)$. The
\emph{upper $L$-fuzzy approximation} and \emph{lower $L$-fuzzy approximation of $A$} are defined by  
\begin{align*}
\U(A)(x) = {\bigvee}_y \big ( R(x,y) \wedge A(y) \big ) \quad \text{ and } \quad \L(A)(x) = {\bigwedge}_y \big ( R(x,y)' \vee A(y) \big ),
\end{align*}  
respectively.
\end{definition}
\noindent%
If there is no danger of confusion, we may denote $\U(A)$ and $\L(A)$ simply by $\U A$ and $\L A$. In addition, 
$L$-fuzzy approximations are called simply $L$-approximations.

In \cite{Pang2019}, the following properties of $L$-approximations generalizing well-known properties of crisp rough approximations are proved.

\begin{proposition} \label{Prop:Basic} Let $U$ be a set, $L$ a complete De~Morgan Heyting algebra and $R$ an $L$-relation on $U$.
For $\{A_i\}_{i \in I} \subseteq \mathcal{F}_L(U)$, $A \in \mathcal{F}_L(U)$ and $x \in U$, the following assertions hold:
\begin{enumerate}[label = {\rm (\arabic*)}]
\item $\L\mathbf{0} = \mathbf{0}$ \  and  \ $\U \mathbf{1} = \mathbf{1}$;
\item $(\U A)' = \L(A')$  \ and  \ $(\L A)' = \U(A')$;
\item $\U({\bigvee}_i A_i) = {\bigvee}_i   \U(A_i)$ \ and  \quad $\L ({\bigwedge}_i A_i ) = {\bigwedge}_i  \L(A_i)$;
\item $\U( {\bigwedge}_i A_i) \leq {\bigwedge}_i  \U(A_i)$ \ and  \ $\L( {\bigvee}_i A_i ) \geq {\bigvee}_i   \L(A_i)$.
\end{enumerate}
\end{proposition}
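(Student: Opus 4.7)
The plan is to verify (1)--(4) by direct calculation from the definitions of $\U$ and $\L$, exploiting only the pointwise structure of $\mathcal{F}_L(U)$, the infinite distributive laws (JID) and (MID) available in every complete De~Morgan Heyting algebra, and the fact that the antitone involution $'$ is an order-anti-isomorphism of $L$.

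For (2), I would expand
\[
(\U A)'(x) = \Bigl({\bigvee}_y \bigl(R(x,y) \wedge A(y)\bigr)\Bigr)'
\]
and push $'$ inside: since $'$ is an antitone involution it turns arbitrary joins into meets and satisfies $(a \wedge b)' = a' \vee b'$, so the right-hand side collapses to ${\bigwedge}_y (R(x,y)' \vee A(y)') = \L(A')(x)$. The second equality in (2) then follows by applying $'$ to both sides of the first and using $a'' = a$.

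For (3), the identity $\U({\bigvee}_i A_i) = {\bigvee}_i \U(A_i)$ is obtained by substituting the pointwise formula for ${\bigvee}_i A_i$, applying (JID) to distribute $R(x,y) \wedge (-)$ over the inner join, and interchanging the resulting double join. The companion statement for $\L$ over meets follows dually using (MID), or alternatively by applying part (2) to the first identity with each $A_i$ replaced by $A_i'$.

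Part (4) is then an immediate consequence of the monotonicity of $\U$ and $\L$, which itself is read off from (3): if $A \le B$ then $\U(B) = \U(A \vee B) = \U(A) \vee \U(B)$, so $\U(A) \le \U(B)$. Applied to ${\bigwedge}_i A_i \le A_j$ for each $j$, this gives $\U({\bigwedge}_i A_i) \le {\bigwedge}_j \U(A_j)$, and the dual reasoning yields the $\L$-inequality. Finally, (1) is a direct unfolding of the definitions at the constant fuzzy sets $\mathbf{0}$ and $\mathbf{1}$. I do not foresee any serious obstacle; the only point demanding care is tracking which of (JID) or (MID) is being used at each step, and correctly interchanging the roles of $\U$ and $\L$ under the involution $'$.
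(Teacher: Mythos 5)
Parts (2)--(4) of your plan are correct and are the standard computation: the De~Morgan law together with the fact that the antitone involution turns arbitrary joins into meets gives (2); (JID)/(MID) plus interchange of iterated joins/meets gives (3); and monotonicity extracted from (3) gives (4). The paper itself offers no proof (it cites Pang--Mi--Yao for this proposition), so there is nothing to compare against beyond noting that this is clearly the intended route.

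The gap is in item (1), which you wave through as ``a direct unfolding of the definitions.'' Carry out that unfolding: $\U(\mathbf{1})(x) = {\bigvee}_y (R(x,y) \wedge 1) = {\bigvee}_y R(x,y)$ and $\L(\mathbf{0})(x) = {\bigwedge}_y (R(x,y)' \vee 0) = {\bigwedge}_y R(x,y)'$. Neither is the claimed constant unless ${\bigvee}_y R(x,y) = 1$ for every $x$, i.e.\ unless $R$ is serial. The paper itself later proves that $\U(\mathbf{1}) = \mathbf{1}$ is \emph{equivalent} to seriality, and the example in the subsection on serial relations exhibits a relation with $\L(\mathbf{0})(x) = a \neq 0$. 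So item (1) as printed is false for a general $L$-relation; the identities that genuinely follow by direct unfolding are $\U(\mathbf{0}) = \mathbf{0}$ and $\L(\mathbf{1}) = \mathbf{1}$, which is presumably what was meant. Your proof of (1) therefore does not close as written: you must either prove the corrected identities or flag the statement as requiring seriality. Everything else is fine.
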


For $a\in L$, we define a `constant' $L$-set $\overline{a}$ by 
\[ \overline{a}(x) = a \text{ for all $x \in U$}.\]
This means that  $\overline{0} = \mathbf{0}$ and  $\overline{1} = \mathbf{1}$. For any $x \in U$, we define a map $I_x$ by
\[
I_x(y) = 
\begin{cases}
1 & \text{if $x =y$},\\
0 & \text{otherwise}.
\end{cases} 
\]
The idea is that the map $I_x$ corresponds to the singleton $\{x\}$. 
For any $x,y \in U$, $\U(I_y)(x) = \bigvee_z (R(x,z) \wedge I_y(z))$. Because $I_y(z) = 1$ iff $z = y$, we obtain the following
equality, which will be used frequently in this work:
\begin{equation} \label{eq:Upper_I_y}
  (\forall x,y \in  U) \, \U(I_y)(x) = R(x,y) .
\end{equation}

It is noted in \cite{Pang2019} that each $A \in \mathcal{F}_L(U)$ can be written in
two ways:
\begin{equation}\label{eq:widehat}
A = {\bigvee}_x \big ( \overline{A(x)} \wedge I_x \big )  = {\bigwedge}_x \big ( \overline{A(x)} \vee (I_x)' \big ). 
\end{equation}
Note that $(I_x)'$ corresponds the set-theoretical complement $U \setminus \{x\}$ of the singleton $\{x\}$, and
the latter equality is clear by Lemma~{3.17} of \cite{Pang2019}.
The following facts were also proved in \cite{Pang2019}.

\begin{lemma} \label{lem:bar_prop}
Let $L$ be a complete De~Morgan Heyting algebra and $R$ an $L$-relation on $U$. For all $a \in L$ and $A \in \mathcal{F}_L(U)$,
\begin{enumerate}[label = {\rm (\arabic*)}]
\item $\U(\overline{a}) \leq \overline{a} \leq \L(\overline{a})$;
\item $\U(\overline{a} \wedge A) = \overline{a} \wedge \U(A)$ \ and \ $\L(\overline{a} \vee A) = \overline{a} \vee \L(A)$;
\item $\U(\overline{a} \vee A) \leq \overline{a} \vee \U(A)$ \ and \ $\L(\overline{a} \wedge A) \geq \overline{a} \wedge \L(A)$.
\end{enumerate}
\end{lemma}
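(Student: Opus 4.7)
The plan is to establish each item by a direct pointwise computation, exploiting the join-infinite distributive law (JID), the meet-infinite distributive law (MID), and, where convenient, the duality between $\U$ and $\L$ provided by Proposition~\ref{Prop:Basic}(2). Throughout I fix an arbitrary $x \in U$ and reduce each claim to an inequality in $L$.

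For item (1), I would start from
\[
\U(\overline{a})(x) = {\bigvee}_y \bigl(R(x,y) \wedge a\bigr) = a \wedge {\bigvee}_y R(x,y) \leq a,
\]
where the middle equality uses (JID) to pull the constant $a$ out of the join. The inequality $\overline{a} \leq \L(\overline{a})$ then follows either by an analogous computation using (MID), giving $\L(\overline{a})(x) = a \vee {\bigwedge}_y R(x,y)' \geq a$, or simply by applying $'$ to the upper bound already established and invoking Proposition~\ref{Prop:Basic}(2) together with the fact that $(\overline{a})' = \overline{a'}$.

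For item (2), I would compute, using (JID),
\[
\U(\overline{a} \wedge A)(x) = {\bigvee}_y \bigl(R(x,y) \wedge a \wedge A(y)\bigr) = a \wedge {\bigvee}_y \bigl(R(x,y) \wedge A(y)\bigr) = \bigl(\overline{a} \wedge \U(A)\bigr)(x).
\]
The companion identity $\L(\overline{a} \vee A) = \overline{a} \vee \L(A)$ is immediate by the same pattern using (MID), or by dualising through $'$ and applying Proposition~\ref{Prop:Basic}(2).

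Item (3) is where the inequality strictness enters: from the identity just used for~(2) I would instead distribute $R(x,y) \wedge (a \vee A(y))$ into $(R(x,y) \wedge a) \vee (R(x,y) \wedge A(y))$, split the join over $y$, and bound the first piece ${\bigvee}_y (R(x,y) \wedge a)$ by $a$ using (1). This gives $\U(\overline{a} \vee A)(x) \leq a \vee \U(A)(x)$, i.e.\ the desired inequality, which in general fails to be an equality precisely because $\bigvee_y R(x,y)$ need not equal $1$. The lower-approximation statement is then obtained either dually or by the analogous computation with $\bigwedge$ and $'\vee$. I do not anticipate a serious obstacle; the only point needing mild care is remembering that the pointwise formulas for $\bigvee_i A_i$ and $\bigwedge_i A_i$ on $\mathcal{F}_L(U)$ allow constants to be extracted term by term, so that all steps truly reduce to identities and inequalities in the underlying complete De~Morgan Heyting algebra $\mathbf{L}$.
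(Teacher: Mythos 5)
Your proof is correct: the paper itself gives no proof of this lemma (it is quoted from Pang, Mi, and Yao \cite{Pang2019}), and your direct pointwise computation via (JID)/(MID), with the duality of Proposition~\ref{Prop:Basic}(2) as an optional shortcut, is exactly the standard argument the cited source uses. All three items check out, including the splitting of $R(x,y)\wedge(a\vee A(y))$ in item (3) and the bound via item (1).
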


\section{Correspondence results}
\label{sec:correspondences}

In this section, we assume that $\mathbf{L}$ is a complete De~Morgan Heyting algebra, $U$ is a universe, and $R$ is an $L$-relation on $U$. 

\subsection{A general result}
\label{subsec:general}

In this subsection, we present a general result which can be used to
obtain several correspondence results. Let $\mathcal{S}$ be a finite 
combination of rough approximation operators $\L$ and $\U$.
Because the operators $\L$ and $\U$ are order-preserving, the operator $\mathcal{S}$ is order-preserving.
From this it follows that
\[ {\bigvee}_i \mathcal{S}(A_i) \leq \mathcal{S} \big ( {\bigvee}_i A_i \big ) \ \text{ and } \
   {\bigwedge}_i \mathcal{S}(A_i) \geq \mathcal{S} \big ( {\bigwedge}_i A_i \big ) . \]

\begin{lemma} \label{lem:step}
For all $a \in L$,
\[
\bar{a} \wedge \mathcal{S}(I_x) \leq \mathcal{S}(\bar{a} \wedge I_x).
\]
\end{lemma}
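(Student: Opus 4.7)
My plan is to induct on the length $n$ of the sequence $\SS$, writing $\SS = \mathcal{O}_n \circ \mathcal{O}_{n-1} \circ \cdots \circ \mathcal{O}_1$ with each $\mathcal{O}_i \in \{\L,\U\}$. Equivalently, I peel off the outermost operator: set $\SS = \mathcal{O} \circ \SS'$ where $\SS'$ is a shorter sequence and $\mathcal{O}$ is applied last. The base case is $\SS = \mathrm{id}$, where the statement $\bar a \wedge I_x \le \bar a \wedge I_x$ is trivial. The two tools I need in the inductive step are already available: Lemma~\ref{lem:bar_prop}(2)--(3) describes how $\U$ and $\L$ interact with $\bar a \wedge (-)$, and $\L,\U$ are order preserving, hence so is $\SS'$.

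For the inductive step, assume $\bar a \wedge \SS'(I_x) \le \SS'(\bar a \wedge I_x)$. If $\mathcal{O} = \U$, then Lemma~\ref{lem:bar_prop}(2) gives the equality
\[
\bar a \wedge \SS(I_x) \;=\; \bar a \wedge \U\bigl(\SS'(I_x)\bigr) \;=\; \U\bigl(\bar a \wedge \SS'(I_x)\bigr),
\]
and applying monotonicity of $\U$ to the induction hypothesis yields $\U(\bar a \wedge \SS'(I_x)) \le \U(\SS'(\bar a \wedge I_x)) = \SS(\bar a \wedge I_x)$. If $\mathcal{O} = \L$, then Lemma~\ref{lem:bar_prop}(3) gives the inequality $\bar a \wedge \L(B) \le \L(\bar a \wedge B)$ for any $B$, so with $B = \SS'(I_x)$,
\[
\bar a \wedge \SS(I_x) \;=\; \bar a \wedge \L\bigl(\SS'(I_x)\bigr) \;\le\; \L\bigl(\bar a \wedge \SS'(I_x)\bigr) \;\le\; \L\bigl(\SS'(\bar a \wedge I_x)\bigr) \;=\; \SS(\bar a \wedge I_x),
\]
using monotonicity of $\L$ together with the induction hypothesis in the second inequality.

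I do not foresee a genuine obstacle: the essential point is that the two inequalities in Lemma~\ref{lem:bar_prop}(2)--(3) both point in the direction that lets $\bar a$ commute inward past any occurrence of $\L$ or $\U$ (with equality for $\U$, with $\le$ for $\L$), so the induction closes uniformly without needing to track which operators appear in $\SS$. The only mild care is to fix the convention on how $\SS$ is decomposed; peeling the outermost operator is slightly cleaner than peeling the innermost, since then the induction hypothesis is directly applied to the argument that $\mathcal{O}$ acts on.
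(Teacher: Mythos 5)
Your proposal is correct and matches the paper's own argument: both induct on the length of $\mathcal{S}$ by peeling off the outermost operator, using Lemma~\ref{lem:bar_prop}(2) (equality) for $\U$ and Lemma~\ref{lem:bar_prop}(3) (inequality) for $\L$, combined with monotonicity and the induction hypothesis. The only cosmetic difference is that you start the induction at the identity operator rather than at length one.
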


\begin{proof} We prove the claim by induction.
If $n = 1$, then the two cases $\bar{a} \wedge \U(I_x) \leq \U(\bar{a} \wedge I_x)$ and $\bar{a} \wedge \L(I_x) \leq \L(\bar{a} \wedge I_x)$
are clear by Lemma~\ref{lem:bar_prop}(2) by setting $A = I_x$.

Suppose that the claim holds for all combinations consisting of $n$ $\L$ and $\U$ operators. Let $\mathcal{S}$ be a combination $n + 1$
operators. Then $\mathcal{S} = \U \circ \mathcal{S}_1$ or $\mathcal{S} = \L \circ \mathcal{S}_2$, where $\mathcal{S}_1$ and $\mathcal{S}_2$ are combinations
of $\U$ and $\L$ of length $n$.

If $\mathcal{S} = \U \circ \mathcal{S}_1$, then
\[ \overline{a} \wedge \mathcal{S}(I_x) = \overline{a} \wedge \U(\mathcal{S}_1(I_x)) = \U(\overline{a} \wedge \mathcal{S}_1(I_x)) \leq
\U(\mathcal{S}_1(\bar{a} \wedge I_x)) = \mathcal{S}(\bar{a} \wedge I_x).
\]

If $\mathcal{S} = \L \circ \mathcal{S}_2$, then
\[ \overline{a} \wedge \mathcal{S}(I_x) = \overline{a} \wedge \L(\mathcal{S}_2(I_x)) \leq \L(\overline{a} \wedge \mathcal{S}_2(I_x)) \leq
\L(\mathcal{S}_2(\bar{a} \wedge I_x)) = \mathcal{S}(\bar{a} \wedge I_x). 
\qedhere
\]
\end{proof}

\begin{theorem} \label{thm:corr_gene}
Let $\mathcal{S}$ be a finite combination of rough approximation operators $\L$ and $\U$.
If $\U(I_x) \leq \mathcal{S}(I_x)$ for all $x \in U$, then $\U(A) \leq \mathcal{S}(A)$ for all $A \in \mathcal{F}_L(A)$.
\end{theorem}

\begin{proof} Assume that $\U(I_x) \leq \mathcal{S}(I_x)$ for all $x \in U$. Then,
\begin{align*}
\U(A) & = \U \big ({\bigvee}_x (\overline{A(x)} \wedge I_x) \big)            & \text{(by \eqref{eq:widehat})}  \\
  & = {\bigvee}_x \U(\overline{A(x)} \wedge I_x)                             & \text{(by Prop.~\ref{Prop:Basic}(3))}  \\
  & = {\bigvee}_x \big (\overline{A(x)} \wedge \U(I_x) \big)              & \text{(by Lemma \ref{lem:bar_prop}(2))} \\
  & \leq {\bigvee}_x \big (\overline{A(x)} \wedge \mathcal{S}(I_x) \big)  & \text{(by assumption)} \\
  & \leq {\bigvee}_x \mathcal{S}(\overline{A(x)} \wedge I_x )                & \text{(by Lemma~\ref{lem:step})} \\
  & \leq \mathcal{S} \big ({\bigvee}_x \overline{A(x)} \wedge I_x \big )     & \text{($\mathcal{S}$ preserves $\leq$)} \\
  & \leq \mathcal{S}(A)                                                      &  \text{(by \eqref{eq:widehat})} 
\end{align*} 
\end{proof}

\subsection{Reflexive and symmetric relations}
\label{subsec:refl_symm}

Let us start with the following definition.

\begin{definition}
An $L$-relation $R$ is \emph{reflexive}, if 
$(\forall x \in U) \, R(x,x)=1$.
\end{definition}

The following lemma gives a characterization of reflexivity.

\begin{lemma} \label{lem:refl}
An $L$-relation $R$ is reflexive if and only if for all $x \in U$, 
$I_x \leq \U({I_x})$.
\end{lemma}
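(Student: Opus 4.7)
The plan is to apply the identity $\U(I_y)(x) = R(x,y)$ that was established earlier in the excerpt, and then unpack the pointwise meaning of $I_x \leq \U(I_x)$ for each $x \in U$.

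Concretely, for each $x \in U$, note that by the identity above we have $\U(I_x)(y) = R(y,x)$ for every $y \in U$. Thus $I_x \leq \U(I_x)$ is equivalent to the pointwise inequality $I_x(y) \leq R(y,x)$ for all $y \in U$. When $y \neq x$ this reads $0 \leq R(y,x)$, which holds trivially, so the only substantive content of the inequality is the case $y = x$, which says $1 \leq R(x,x)$, that is, $R(x,x) = 1$.

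For the forward direction, assuming reflexivity of $R$, we have $R(x,x) = 1$ for all $x$, and combined with the trivial inequality at $y \neq x$ this yields $I_x \leq \U(I_x)$. For the reverse direction, assuming $I_x \leq \U(I_x)$ for all $x$, evaluating at $y = x$ gives $1 = I_x(x) \leq \U(I_x)(x) = R(x,x)$, so $R(x,x) = 1$ for every $x$, i.e., $R$ is reflexive.

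I do not expect any real obstacle here: the argument is a direct unpacking of the pointwise definitions once the identity $\U(I_y)(x) = R(x,y)$ is in hand, and no use of Lemma~\ref{lem:bar_prop} or Proposition~\ref{prop:corr_help} is required. The lemma is essentially a "base case" that will feed into richer characterizations (such as the one for transitivity or symmetry) later in the section via Proposition~\ref{prop:corr_help}.
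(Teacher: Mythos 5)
Your proof is correct and follows essentially the same route as the paper: both reduce $I_x \leq \U(I_x)$ to the single pointwise condition $R(x,x)=1$ via the identity $\U(I_y)(x)=R(x,y)$ (the paper phrases the converse contrapositively, and you are merely a bit more explicit about the trivial case $y \neq x$). No gaps.
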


\begin{proof} 
If $R$ is reflexive, $\U({I_x})(x) = R(x,x) = 1 = I_x(x)$ for all $x \in U$. If $R$ is not reflexive, then there is $x \in U$ such that $R(x,x) \neq 1$.
Now, $I_x(x) = 1$ and $\U(I_x)(x) = R(x,x) \neq 1$, giving $I_x \nleq \U({I_x})$.
\end{proof}

\begin{proposition} \label{prop:reflexive}
Let $\mathbf{L}$ complete De~Morgan Heyting algebra and $R$ an $L$-relation on $U$.
Then, the following are equivalent:
\begin{enumerate}[label = {\rm (\arabic*)}]
\item $R$ is reflexive;
\item $(\forall A \in \mathcal{F}_L(U)) \, A \leq \U A$;
\item $(\forall A \in \mathcal{F}_L(U)) \, \L A \leq A$.
\end{enumerate}
\end{proposition}

\begin{proof}
Suppose that $R$ is reflexive. Then, by Lemma~\ref{lem:refl}, $I_x \leq \U({I_x})$ for all $x \in U$.
We have that for $A \in \mathcal{F}_L(U)$,
\begin{align*} \textstyle
  A &= {\bigvee}_x \Big ( \overline{A(x)} \wedge I_x \Big ) \leq {\bigvee}_x \Big ( \overline{A(x)} \wedge \U({I_x}) \Big )
  = {\bigvee}_x \U \Big ( \overline{A(x)} \wedge I_x \Big ) \\
  & =  \U \Big ( {\bigvee}_x ( \overline{A(x)} \wedge {I_x}) \Big ) = 
  \U A.
\end{align*}
Thus, (1) implies (2). Suppose that (2) holds. Then for all $x \in U$,
$I_x \leq \U({I_x})$, which by Lemma~\ref{lem:refl} yields that $R$ is reflexive. Thus, also (2) implies (1).

We prove that (2) and (3) are equivalent. Assume (2) holds. Then for any $A \in \mathcal{F}_L(U)$,  $A' \leq \U(A') = (\L A)'$, which implies $\L A \leq A$.
Similarly, if (3) holds, then $\L(A') \leq A'$ implies $A \leq (\L(A'))' = \U(A)$.
\end{proof}

A binary relation $\rho$ is symmetric whenever, for all $x,y \in U$, 
$x \, \rho \, y$ implies $y \, \rho \, x$. The symmetry condition can be expressed in the form
$(x \, \rho^c \, y) \vee (y \rho \, x)$, where $\rho^c$ denotes the complement of the relation $\rho$, that is,
$\rho^c = (U \times U) \setminus \rho$. Based on this fact, we
present the following definition.

\begin{definition} \label{def:symmetry}
An $L$-relation $R$ \emph{symmetric} if
$(\forall x,y \in U) \, R(x,y)' \vee R(y,x) = 1$.
\end{definition}
  
We can now express symmetry in terms of $I_x$ and $L$-approximations.

\begin{lemma} \label{lem:symm}
An $L$-relation $R$ is symmetric if and only if for all $x \in U$, 
$I_x \leq \L \U (I_x)$.
\end{lemma}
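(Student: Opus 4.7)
The plan is to unfold both sides of the candidate inequality and observe that $I_x \leq \L\U(I_x)$ carries nontrivial content only at the diagonal point $w=x$, since $I_x$ vanishes elsewhere.

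First I would use the identity $\U(I_y)(x) = R(x,y)$ established just before Lemma~\ref{lem:refl} to rewrite $\U(I_x)(z) = R(z,x)$. Applying $\L$ at a point $w \in U$ then gives
\[
\L\U(I_x)(w) \;=\; {\bigwedge}_z \bigl( R(w,z)' \vee R(z,x) \bigr).
\]
Everything else is an immediate case split on whether $w = x$.

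For the forward direction, assume $R$ is symmetric in the sense of \eqref{eq:symm}. When $w \neq x$ the inequality $I_x(w) = 0 \leq \L\U(I_x)(w)$ is automatic. When $w = x$, symmetry applied termwise gives $R(x,z)' \vee R(z,x) = 1$ for every $z$, so the meet above equals $1 = I_x(x)$, as required.

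For the converse, assume $I_x \leq \L\U(I_x)$ for every $x \in U$. Evaluating at $w = x$ forces
\[
1 \;=\; I_x(x) \;\leq\; {\bigwedge}_z \bigl( R(x,z)' \vee R(z,x) \bigr),
\]
which in turn forces $R(x,z)' \vee R(z,x) = 1$ for every $z$. Renaming $z$ as $y$ recovers precisely the condition \eqref{eq:symm}. There is no real obstacle in this lemma: once one observes that $I_x$ is supported only at $x$, both directions reduce to reading the symmetry equation $R(x,y)' \vee R(y,x) = 1$ off the diagonal value of $\L\U(I_x)$.
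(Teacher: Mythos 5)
Your proof is correct and follows essentially the same route as the paper's: unfold $\L\U(I_x)$ via $\U(I_y)(x)=R(x,y)$ and read the symmetry condition \eqref{eq:symm} off the value at the diagonal point. The only difference is that you make explicit the trivial off-diagonal case $w\neq x$, which the paper leaves implicit.
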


\begin{proof} For all $x \in U$,
  \[ \L \U (I_x)(x) = {\bigwedge}_y ( R(x,y)' \vee \U (I_x)(y) ) = {\bigwedge}_y (R(x,y)' \vee R(y,x)).\]
If $R$ is symmetric, then $\L \U (I_x)(x) = 1$, which means that
$\L \U (I_x)(x) \geq I_x (x)$. On the other hand,
if  $ \L \U (I_x) \geq I_x$, then $ \L \U (I_x)(x) = 1$ and 
$\bigwedge_y (R(x,y)' \vee R(y,x)) = 1$.
This gives that for each $y$, $R(x,y)' \vee R(y,x) = 1$.
\end{proof}

We can now write the following characterization of symmetric $L$-relations.

\begin{proposition} Let $\mathbf{L}$ complete De~Morgan Heyting algebra and $R$ an $L$-relation on $U$. Then, the following are equivalent:
\begin{enumerate}[label = {\rm (\arabic*)}]
\item $R$ is symmetric;
\item $(\forall A \in \mathcal{F}_L(U)) \, A \leq \L \U A$;
\item $(\forall A \in \mathcal{F}_L(U)) \, \U \L A \leq A$.
\end{enumerate}
\end{proposition}

\begin{proof}
Suppose that $R$ is symmetric. Then, by Lemma~\ref{lem:symm}, 
$I_x \leq \L \U (I_x)(x)$ for each $x \in U$. We have that
\begin{align*} 
A & = {\bigvee}_x \big (\overline{A(x)} \wedge I_x \big) \leq {\bigvee}_x \big (\overline{A(x)} \wedge  \L \U (I_x) \big )
\leq {\bigvee}_x \L \big (\overline{A(x)} \wedge \U ({I_x}) \big ) \\
& = {\bigvee}_x \L \U \big ( \overline{A(x)} \wedge {I_x} \big )
\leq \L \U \Big ({\bigvee}_x (\overline{A(x)} \wedge {I_x}) \Big ) = \L \U A.
\end{align*}
This means that (1) implies (2). If we set $A = I_x$ in (2), we get that $I_x \leq \L \U (I_x)$ for $x \in U$.
Hence, $R$ is symmetric. The equivalence of (2) and (3) follows easily by the duality of approximation operations.
\end{proof}

We end this subsection by the following remark.

\begin{remark}
For a fuzzy relation $R$, symmetry is typically expressed by a condition 
\begin{equation} \tag{S*} \label{eq:symm1}
 (\forall x,y \in U) \, R(x,y) = R(y,x).
 \end{equation}
Let us briefly consider how this relates to the symmetry of 
Definition~\ref{def:symmetry}. 

The three-element chain $\mathbf{3} := 0 < u < 1$ is a well-known
Heyting algebra. The operation
$'$ is defined by $0' = 1$, $u' = u$, and $1' = 0$. Assume that
$U = \{x,y\}$. Let us define a $\mathbf{3}$-fuzzy relation $R$ on $U$ in such a way that $R(x,y) = R(y,x) = u$, $R(x,x) = 0$ and $R(y,y) = 1$. 
Now the relation satisfies \eqref{eq:symm1}. 
But $R(x,y)' \vee R(y,x) = u \vee u = u$, so \eqref{eq:symm1} does 
not imply symmetry in the sense of Definition~\ref{def:symmetry}.

On the other hand, consider the 4-element Heyting algebra
$L := 0 < a,b < 1$, where $a$ and $b$ are incomparable. 
Let the operation $'$ be defined by $0' = 1$, $a' = a$,
$b' = b$, $1' = 0$. Assume $U = \{x,y\}$ and define the $L$-relation 
$R$ by $R(x,x) = 1$, $R(y,y) = 0$, $R(x,y) = a$, $R(y,x) = b$.
Now $R$ is symmetric in the sense of Definition~\ref{def:symmetry}, 
but  $R(x,y) \neq R(y,x)$, that is,
\eqref{eq:symm1} does not hold.

It remains an \emph{open problem} whether \eqref{eq:symm1} be characterized in terms of fuzzy rough approximation operations.
\end{remark}

\subsection{Transitive and mediate relations}
\label{subsec:trans_mediate}

\begin{definition} \label{def:transitive}
An $L$-relation is \emph{transitive}, if 
$(\forall x,y,z \in U) \, R(x, z) \wedge R(z, y) \leq R(x, y)$. 
\end{definition}

Obviously, transitivity is equivalent to that $(\forall x,y \in U) \,\bigvee_z (R(x, z) \wedge R(z, y)) \leq R(x, y)$.

\begin{lemma} \label{lem:trans}
An $L$-relation $R$ is transitive if and only if for all $x \in U$, $\U \U (I_x) \leq \U (I_x)$.
\end{lemma}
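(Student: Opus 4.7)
The plan is to reduce both sides of the inequality to expressions involving only values of $R$, using the identity $\U(I_y)(x) = R(x,y)$ that was highlighted just before the subsection on reflexivity and symmetry, and then observe that the resulting inequality is nothing but the stated form of transitivity.

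First, for any $w \in U$, I would unfold the inner approximation using that same identity to get
\[
\U\U(I_x)(w) \;=\; {\bigvee}_y \big( R(w,y) \wedge \U(I_x)(y) \big) \;=\; {\bigvee}_y \big( R(w,y) \wedge R(y,x) \big),
\]
and similarly note that $\U(I_x)(w) = R(w,x)$.

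Next, I would argue both directions in parallel. The condition $\U\U(I_x) \leq \U(I_x)$ holding for every $x \in U$ is, pointwise in $w$, equivalent to
\[
{\bigvee}_y \big( R(w,y) \wedge R(y,x) \big) \leq R(w,x) \qquad \text{for all } w,x \in U,
\]
which is exactly the reformulation of transitivity noted just above the lemma. Conversely, if $R$ is transitive, then for each $y$ the summand $R(w,y) \wedge R(y,x)$ is $\leq R(w,x)$, so the supremum over $y$ is $\leq R(w,x)$, giving $\U\U(I_x)(w) \leq \U(I_x)(w)$.

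There is no real obstacle here: once the characteristic identity $\U(I_x)(w)=R(w,x)$ is applied twice, the inequality on $L$-sets is literally the componentwise transitivity condition, and the two directions fall out immediately from the basic fact that a supremum dominates its terms and is dominated by any common upper bound.
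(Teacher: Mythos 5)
Your proof is correct and follows essentially the same route as the paper's: unfold $\U\U(I_x)$ via the identity $\U(I_y)(x)=R(x,y)$ to get $\bigvee_z(R(w,z)\wedge R(z,x))$, and observe that the pointwise inequality against $\U(I_x)(w)=R(w,x)$ is exactly the join form of transitivity noted just before the lemma. No gaps.
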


\begin{proof} For all $x \in U$,
\[ \U \U (I_x)(y) = {\bigvee}_z \big (R(y,z) \wedge \U (I_x)(z) \big ) = {\bigvee}_z (R(y,z) \wedge R(z,x)) ,\] 
which is below $R(y,x) = \U(I_x)(y)$ by the transitivity of $R$. 

On the other hand, for all $x,y \in U$,
\[ \U \U (I_y)(x) = {\bigvee}_z ( R(x,z) \wedge R(z,y) ) \quad \text{and} \quad \U (I_y)(x) = R(x,y).\]
If $\U \U (I_y) \leq \U (I_y)$, then the transitivity condition holds for $x$ and $y$, which completes the proof. 
\end{proof}

The next proposition characterizes transitive $L$-relations in terms of approximation operations.

\begin{proposition} \label{prop:transitive}
Let $\mathbf{L}$ be a complete De~Morgan Heyting algebra and $R$ an $L$-relation on $U$. Then, the following are equivalent:
\begin{enumerate}[label = {\rm (\arabic*)}]
\item $R$ is transitive;
\item $(\forall A \in \mathcal{F}_L(U)) \, \U \U A \leq \U A$;
\item $(\forall A \in \mathcal{F}_L(U)) \, \L A \leq \L \L A$.
\end{enumerate}
\end{proposition}

\begin{proof} Assume that $R$ is transitive. By Lemma~\ref{lem:trans}, $\U \U (I_x) \leq \U (I_x)$ for $x \in U$. We get that
\begin{align*}
\U \U A &=  \U \U \Big ( \Big ({\bigvee}_x (\overline{A(x)} \wedge I_x) \Big) \Big ) = \U \Big ( {\bigvee}_x \U ( \overline{A(x)} \wedge I_x) \Big )
= \U \Big ( {\bigvee}_x ( \overline{A(x)} \wedge \U (I_x) \Big ) \\
& =  \Big ( {\bigvee}_x ( \overline{A(x)} \wedge \U \U (I_x) \Big ) 
\leq {\bigvee}_x (\overline{A(x)} \wedge \U (I_x)) = 
\U \Big ({\bigvee}_x (\overline{A(x)} \wedge I_x) \Big ) \\
& = \U A.
\end{align*}
Thus, (2) holds. Conversely, if (2) holds, then $\U \U (I_x) \leq \U (I_x)$ for all $x \in U$ and $R$ is transitive.
The equivalence of (2) and (3) is clear.
\end{proof}

We recall the following definition from \cite{Pang2019}.
\begin{definition}
An $L$-relation $R$ is called \emph{mediate} if 
$(\forall x,y \in U) \, R(x,y) \le \bigvee_z ( R(x,z) \wedge R(z,y) )$.
\end{definition}

\begin{lemma} \label{lem:mediate}
An $L$-relation $R$ is mediate if and only if for all $x \in U$, 
$\U \U (I_x) \geq \U (I_x)$.
\end{lemma}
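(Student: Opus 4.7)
The plan is to mirror the proof of Lemma~\ref{lem:trans} almost verbatim, since mediacy is the exact dual inequality of transitivity at the level of the relation. The two computations I need are already in hand from the preceding text: $\U(I_y)(x) = R(x,y)$ and
\[
\U\U(I_y)(x) \;=\; {\bigvee}_z \big( R(x,z) \wedge \U(I_y)(z) \big) \;=\; {\bigvee}_z \big( R(x,z) \wedge R(z,y) \big).
\]
So the mediate condition, $R(x,y) \leq {\bigvee}_z (R(x,z) \wedge R(z,y))$ for all $x,y \in U$, is pointwise identical to $\U(I_y)(x) \leq \U\U(I_y)(x)$ for all $x,y \in U$.

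For the forward direction, I would assume $R$ is mediate and fix $x \in U$. For every $y \in U$, the mediate inequality at the pair $(y,x)$ gives
\[
\U(I_x)(y) \;=\; R(y,x) \;\leq\; {\bigvee}_z \big( R(y,z) \wedge R(z,x) \big) \;=\; \U\U(I_x)(y),
\]
so $\U(I_x) \leq \U\U(I_x)$ pointwise.

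For the converse, assume $\U(I_x) \leq \U\U(I_x)$ for every $x \in U$. Picking an arbitrary pair $(x,y)$ and evaluating the inequality $\U(I_y) \leq \U\U(I_y)$ at the point $x$ recovers $R(x,y) \leq {\bigvee}_z (R(x,z) \wedge R(z,y))$, which is precisely mediacy of $R$.

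There is no real obstacle here: the only thing to be careful about is the consistent swap of variables between "pointwise at $x$ evaluated on $I_y$" and the statement of the lemma, which uses a single quantified $x$ on both sides of $\U(I_x) \leq \U\U(I_x)$. Once this is transcribed correctly, the proof is a one-line verification in each direction.
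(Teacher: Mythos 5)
Your proof is correct and follows essentially the same route as the paper's: both reduce the statement to the pointwise identity $\U(I_y)(x) = R(x,y)$ and $\U\U(I_y)(x) = \bigvee_z (R(x,z)\wedge R(z,y))$, so that the mediate condition and the operator inequality on the $I_x$ coincide. You merely spell out the variable bookkeeping that the paper leaves implicit.
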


\begin{proof} For all $x,y \in U$,
\[
\U (I_y)(x) \le \U \U (I_y)(x) \iff R(x,y) \le {\bigvee}_z (R(x,z) \wedge R(z,y)), 
\]
which completes the proof.
\end{proof}

We may now write the following proposition.

\begin{proposition} \label{prop:mediate}
Let $\mathbf{L}$ be a complete De~Morgan Heyting algebra and $R$ an $L$-relation on $U$. Then, the following are equivalent:
\begin{enumerate}[label = {\rm (\arabic*)}]
\item $R$ is mediate;
\item $(\forall A \in \mathcal{F}_L(U)) \, \U A \leq \U \U A$;
\item $(\forall A \in \mathcal{F}_L(U)) \, \L \L A \leq \L A$.
\end{enumerate}
\end{proposition}

\begin{proof} Assume that $R$ is mediate. By Lemma~\ref{lem:mediate}, $\U (I_x) \leq \mathcal{S} (I_x) = \U \U (I_x)$ for all $x \in U$, 
where $\mathcal{S}$ denotes the  combination $\U \U$. Therefore,
$\U A \leq \mathcal{S}(A) = \U \U A$ for all $A \in \mathcal{F}_L(U)$
follows directly from Theorem~\ref{thm:corr_gene}. Therefore, (2) implies (1). By applying $A = I_x$, it is immediate that (2) implies (1). 
 
The equivalence of (2) and (3) can be proved as in our earlier proofs. 
\end{proof}

\subsection{Euclidean and adjoint relations}
\label{subsec:EucAdj}

A binary relation $\rho$ is Euclidean if $x \, \rho \, y$ and $x \, \rho \, z$ imply $y \, \rho \, z$. Obviously,
$x \, \rho \, y$ and $x \, \rho \, z$ imply also $z \, \rho \, y$. 
As noted in \cite{Pang2019}, this is equivalent to
that $x \, \rho \, z$ and $z \, \rho^c \, y$ imply $x \, \rho^c \, y$. 

\begin{definition}
An $L$-relation $R$ on $U$ is called \emph{Euclidean} if 
\[ (\forall x,y,z \in U) \, R(x,y)' \geq  R(x,z) \wedge R(z,y)'.\] 
\end{definition}
\noindent%
Obviously, being Euclidean is equivalent to the condition
\begin{equation} \label{eq:euc} \tag{euc}
(\forall x,y \in L) \, R(x,y)' \geq {\bigvee}_z (R(x,z) \wedge R(z,y)' ) .
\end{equation}

\begin{lemma} \label{lem:Euc}
An $L$-relation $R$ is Euclidean if and only if for all $x \in U$, $\U (I_x) \leq \L \U (I_x)$.
\end{lemma}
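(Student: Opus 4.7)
The plan is to translate the inequality $\U(I_x) \le \L\U(I_x)$ into a pointwise inequality between values of $R$, and then recognize that inequality as being equivalent to the Euclidean condition \eqref{eq:euc} by a single De~Morgan manipulation.

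First I would use the identity $\U(I_x)(y) = R(y,x)$ (established earlier in the paper) to rewrite the right-hand side:
\[
\L\U(I_x)(y) = {\bigwedge}_z \bigl( R(y,z)' \vee \U(I_x)(z) \bigr) = {\bigwedge}_z \bigl( R(y,z)' \vee R(z,x) \bigr).
\]
Since the left-hand side $\U(I_x)(y) = R(y,x)$ does not depend on $z$, the inequality $\U(I_x) \le \L\U(I_x)$ holding for all $x \in U$ is equivalent to the pointwise statement
\[
(\forall x,y,z \in U) \quad R(y,x) \le R(y,z)' \vee R(z,x).
\]
After renaming $y \leftrightarrow x$ and keeping $z$ as the intermediate element, this is
\[
(\forall x,y,z \in U) \quad R(x,y) \le R(x,z)' \vee R(z,y). \tag{$\ast$}
\]

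Next I would show that $(\ast)$ is equivalent to the Euclidean condition $R(x,z) \wedge R(z,y)' \le R(x,y)'$. Applying the antitone involution $'$ to both sides of the Euclidean inequality and using $(u \wedge v)' = u' \vee v'$ together with $v'' = v$ yields exactly $(\ast)$; conversely, starting from $(\ast)$ and applying $'$ recovers the Euclidean inequality. Since \eqref{eq:euc} is itself just the triple-variable formulation of the Euclidean condition (the join over $z$ on the right collapses into the universal quantifier over $z$), this finishes the equivalence.

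There is no real obstacle here: the only step that requires care is making sure the quantifier structure matches on both sides, that is, verifying that the single pointwise inequality over all $(x,y,z)$ obtained from $\U(I_x) \le \L\U(I_x)$ is the same triple-variable inequality as the Euclidean condition after applying $'$. The use of Lemma~\ref{lem:bar_prop} or Proposition~\ref{prop:corr_help} is not needed because both the hypothesis and the conclusion are already stated on the generators $I_x$, so this lemma is genuinely a direct unpacking of definitions.
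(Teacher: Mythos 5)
Your proposal is correct and follows essentially the same route as the paper's proof: compute $\L\U(I_y)(x) = \bigwedge_z (R(x,z)' \vee R(z,y))$, reduce the operator inequality to a pointwise one, and pass back and forth to the Euclidean condition via the antitone involution. The only cosmetic difference is that you apply $'$ termwise under the universal quantifier over $z$, whereas the paper applies the De~Morgan law to the whole join $\bigvee_z$; both are valid since $'$ is an order-isomorphism onto the dual.
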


\begin{proof} Let $x,y \in U$. As we have noted, $\U(I_y)(x) = R(x,y)$ and
\[ \L\U(I_y)(x) = {\bigwedge}_z (R(x,z)' \vee \U(I_y)(z)) = 
{\bigwedge}_z (R(x,z)' \vee R(z,y)).\] 
If $R$ is Euclidean, then $R(x,y)' \geq \bigvee_z (R(x,z) \wedge R(z,y)' )$, which implies
\begin{align*}
R(x,y) & = R(x,y)'' \leq \big ( {\bigvee}_z (R(x,z) \wedge R(z,y)' ) \big)' =
{\bigwedge}_z (R(x,z) \wedge R(z,y))' \\
& = {\bigwedge}_z (R(x,z)' \vee R(z,y)'') = {\bigwedge}_z (R(x,z)' \vee R(z,y)),
\end{align*}
This means $\U(I_y) \leq \L\U(I_y)$.

On the other hand, if $\U(I_y) \leq \L\U(I_y)$, then 
for all $x \in U$, $\U(I_y)(x) \leq \L \U (I_y)(x)$ and thus,
\[ R(x,y) \leq {\bigwedge}_z (R(x,z)' \vee R(z,y)).\]
Applying De Morgan operation $'$ for the both sides of the relation, we
obtain that \eqref{eq:euc} holds. 
\end{proof}

The next proposition characterizes Euclidean $L$-relations in terms of approximations.

\begin{proposition} \label{prop:Eucl}
Let $\mathbf{L}$ be a complete De~Morgan Heyting algebra and $R$ an $L$-relation on $U$. Then, the following are equivalent:
\begin{enumerate}[label = {\rm (\arabic*)}]
\item $R$ is Euclidean;
\item $(\forall A \in \mathcal{F}_L(U)) \, \U A \leq \L\U A$;
\item $(\forall A \in \mathcal{F}_L(U)) \, \U \L A \leq \L A$.
\end{enumerate}
\end{proposition}

\begin{proof} If $R$ is Euclidean, then $\U (I_x) \leq \L \U (I_x)$ for all $x \in U$. By Theorem~\ref{thm:corr_gene},
$\U A \leq \L \U A$ for all $A \in \mathcal{F}_L(U)$. Thus, (2) implies (1). By applying $A = I_x$, we see that (2) implies (1). 
The equivalence of (2) and (3) is obvious.
\end{proof}

We recall from \cite{Pang2019} the definition of adjoint relations.

\begin{definition}
An $L$-relation $R$ on $U$ is \emph{adjoint} if 
\[ (x,y \in U) \,
R(x,y)' \ge {\bigwedge}_z {\bigvee}_{w\neq y} (R(x,z)' \vee R(z,w)).
\]
\end{definition}

We can now characterize adjoint relations in terms of $I_x$ and
fuzzy approximations operations.

\begin{lemma} \label{lem:adjoint}
An $L$-relation $R$ is adjoint if and only if for all $x \in U$, $\U (I_x) \leq \U \L (I_x)$.
\end{lemma}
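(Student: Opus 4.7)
The plan is to unwind both sides of the claimed equivalence by direct computation on the characteristic maps $I_y$, and then translate between them using De~Morgan's laws together with $x'' = x$.

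First I would compute $\L(I_y)(z)$ for arbitrary $z \in U$. As in the derivation of $\L(I_y)$ already done in the serial subsection of the excerpt, the term $R(z,w)' \vee I_y(w)$ equals $1$ when $w = y$ and equals $R(z,w)'$ when $w \ne y$, so
\[
\L(I_y)(z) = {\bigwedge}_{w \ne y} R(z,w)' = \Big ({\bigvee}_{w \ne y} R(z,w) \Big )'.
\]
Applying $\U$ to $\L(I_y)$ then gives
\[
\U\L(I_y)(x) = {\bigvee}_z \Big ( R(x,z) \wedge \Big ({\bigvee}_{w \ne y} R(z,w) \Big )' \Big ).
\]
Since $\U(I_y)(x) = R(x,y)$, the condition $\U(I_y) \le \U\L(I_y)$ becomes
\[
R(x,y) \le {\bigvee}_z \Big ( R(x,z) \wedge \Big ({\bigvee}_{w \ne y} R(z,w) \Big )' \Big ) \quad \text{for all } x,y \in U.
\]

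Next I would rewrite the adjointness condition to match this. The key observation is that $R(x,z)'$ does not depend on $w$, so it factors out of the inner join:
\[
{\bigvee}_{w \ne y} (R(x,z)' \vee R(z,w)) = R(x,z)' \vee {\bigvee}_{w \ne y} R(z,w).
\]
Hence $R$ is adjoint iff $R(x,y)' \ge \bigwedge_z \big( R(x,z)' \vee \bigvee_{w \ne y} R(z,w) \big)$ for all $x,y \in U$. Taking the De~Morgan involution of both sides (which reverses inequalities, turns meets into joins, and sends $R(x,z)''$ back to $R(x,z)$) yields exactly the displayed inequality above. Both directions are covered because $'$ is an involution and therefore an order-reversing bijection, so the two inequalities are equivalent statement by statement for each pair $(x,y)$.

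The only mild obstacle is getting the indexing inside the De~Morgan step right — in particular confirming that factoring $R(x,z)'$ out of $\bigvee_{w \ne y}$ is legitimate (it is, because the $w$-indexed family is nonempty whenever $|U| \ge 2$, and if $|U| = 1$ both sides of the adjoint condition and of the approximation inequality degenerate consistently). Everything else is mechanical manipulation of the already-derived formulas for $\U(I_y)$ and $\L(I_y)$.
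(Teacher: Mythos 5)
Your proof is correct and follows essentially the same route as the paper: both reduce the adjointness inequality to $\U(I_y)(x) \leq \U\L(I_y)(x)$ by factoring $R(x,z)'$ out of the inner join and applying the De~Morgan involution, the only cosmetic difference being that the paper recognizes the right-hand side as $\L\U({I_y}')(x) = (\U\L(I_y)(x))'$ via the duality of $\L$ and $\U$, whereas you compute $\U\L(I_y)$ explicitly. Your explicit check of the singleton-universe degeneracy is a small bonus the paper leaves implicit.
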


\begin{proof}
For all $x,y \in U$, we have the following equality:
\begin{align*}
  {\bigwedge}_{z} {\bigvee}_{w\neq y} (R(x,z)' \vee R(z,w)) 
  & = {\bigwedge}_{z} \big ( R(x,z)' \vee {\bigvee}_{w\neq y} R(z,w) \big ) \\
  & = {\bigwedge}_{z} \big ( R(x,z)' \vee {\bigvee}_{w} (R(z,w) \wedge {I_y}'(w)  \big) \\
  & = {\bigwedge}_{z} \big ( R(x,z)' \vee \U({I_y}')(z)  \big )\\
  & = \L \U({I_y}')(x) \\
  & = (\U \L ({I_y})(x))'.  
\end{align*}
If $R$ is adjoint, then
\[ (\U (I_y)(x))' = R(x,y)' \geq ( \U \L({I_y})(x)) ',\]
which is equivalent to $\U (I_y)(x) \leq \U \L ({I_y})(x)$. Hence,  $\U(I_y) \leq \U \L ({I_y})$ for all $y \in U$.
Conversely, if $\U (I_y) \leq  \U \L (({I_y})$ for all $y \in U$, 
then by the above equality, $R$ is adjoint.
\end{proof}

We can now extend Lemma~\ref{lem:adjoint} to the following correspondence
for adjoint relations using our Theorem~\ref{thm:corr_gene}.

\begin{proposition} \label{prop:adjoint}
Let $\mathbf{L}$ be a complete De~Morgan Heyting algebra and $R$ an $L$-relation on $U$. Then, the following are equivalent:
\begin{enumerate}[label = {\rm (\arabic*)}]
\item $R$ is adjoint;
\item $(\forall A \in \mathcal{F}_L(U)) \, \U A \leq \U \L A$;
\item $(\forall A \in \mathcal{F}_L(U)) \, \L \U A \leq \L A$.
\end{enumerate}
\end{proposition}

\begin{proof} If $R$ is adjoint, then $\U (I_x) \leq  \U \L (I_x)$ for all $x \in U$. Theorem~\ref{thm:corr_gene} gives that
$\U A \leq \U \L A$ for all $A \in \mathcal{F}_L(U)$. Thus, (1) implies (2). By applying $A = I_x$, we see that (2) implies (1). 
 
The equivalence of (2) and (3) is clear by the duality.
\end{proof}

\subsection{Functional and alliance relations}
\label{subsec:func_alliance}

A binary relation $\rho$ is functional when each $x \in U$ is $\rho$-related to at most one element. 
This means that $x \, \rho \, y$ implies that for all 
$z \in (U \setminus \{y\})$, $x \, \rho^c \, y$. 
Based on this, we present the following definition.

\begin{definition}\label{def:functional}
Thus, an $L$-relation $R$ on $U$ is \emph{functional} if 
\[ (\forall x,y) \, R(x,y) \leq  \bigwedge_{z \neq y} R(x,z)' .\]
\end{definition}
\noindent%
The condition means that $R(x,y) \leq  R(x,z)'$ holds for all
$x,y \in U$ and $z \neq y$.

\begin{lemma} 
An $L$-relation $R$ is functional if and only if for all $x \in U$, $\U (I_x) \leq \L (I_x)$.
\end{lemma}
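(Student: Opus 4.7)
The plan is to reduce the inequality $\U(I_x) \leq \L(I_x)$ to the defining inequality of functionality via the explicit pointwise formulas for $\U(I_x)$ and $\L(I_x)$ that were already computed in the preamble to Section~3. Specifically, I would invoke the identity $\U(I_y)(x) = R(x,y)$ together with the identity
\[
\L(I_y)(x) = {\bigwedge}_{z \neq y} R(x,z)',
\]
both of which have already been established immediately after Example~\ref{exa:serial_not_one}.

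With these formulas in hand, the proof becomes a direct unfolding. For the forward direction, I would assume $R$ is functional, i.e.\ $R(x,y) \leq \bigwedge_{z \neq y} R(x,z)'$ for all $x,y \in U$; rewriting each side through the two identities above immediately yields $\U(I_y)(x) \leq \L(I_y)(x)$ at every point $x$, so $\U(I_y) \leq \L(I_y)$ for all $y$. For the converse, assuming $\U(I_x) \leq \L(I_x)$ pointwise for every $x$, I would evaluate both sides at an arbitrary point and apply the same two identities in reverse, obtaining precisely $R(y,x) \leq \bigwedge_{z \neq x} R(y,z)'$ for all $x,y$, which after relabeling variables is the functionality condition.

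I do not expect any real obstacle here, since the argument is purely a rewriting exercise built on top of the two pointwise formulas; there is no inductive step, no use of Proposition~\ref{prop:corr_help}, and no invocation of Lemma~\ref{lem:bar_prop}. The only point that requires a bit of care is the bookkeeping of which variable plays which role (the ``$x$'' indexing the singleton $I_x$ in the statement of the lemma becomes the ``$y$'' appearing as the second argument of $R$ in the functionality condition), so I would make the variable naming explicit in the proof to avoid confusion.
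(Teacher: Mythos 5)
Your proposal is correct and follows essentially the same route as the paper: both reduce the operator inequality to the pointwise identities $\U(I_y)(x) = R(x,y)$ and $\L(I_y)(x) = \bigwedge_{z \neq y} R(x,z)'$, after which the equivalence with the functionality condition is immediate. The only cosmetic difference is that the paper recomputes the formula for $\L(I_y)(x)$ inside the proof, whereas you cite the earlier derivation.
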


\begin{proof} For all $x,y \in U$, $\U (I_y)(x) = R(x,y)$ and
\begin{align*}
  \L (I_y)(x) &= {\bigwedge}_z (R(x,z)' \vee (I_y)(z)) = 
  \Big ( {\bigwedge}_{z \neq y} (R(x,z)' \vee 0) \Big ) \wedge (R(x,y)' \vee 1) \\
  &= \Big ( {\bigwedge}_{z \neq y} R(x,z)' \Big ) \wedge 1 = 
  {\bigwedge}_{z \neq y} R(x,z)' .
\end{align*}
Because for all $y \in U$, $\U (I_y) \leq \L (I_y)$ is equivalent to that $\U (I_y)(x) \leq \L (I_y)(x)$ for all $x,y \in U$, the claim is proved. 
\end{proof}

\begin{proposition} \label{prop:func}
Let $\mathbf{L}$ be a complete De~Morgan Heyting algebra and $R$ an $L$-relation on $U$. Then, the following are equivalent:
\begin{enumerate}[label = {\rm (\arabic*)}]
\item $R$ is functional;
\item $(\forall A \in \mathcal{F}_L(U)) \, \U A \leq \L A$.
\end{enumerate}
\end{proposition}

\begin{proof}
If $R$ is functional, then $\U (I_x) \leq \L (I_x)$ for all $x \in U$. By Theorem~\ref{thm:corr_gene}, we obtain
$\U A \leq \L A$ for all $A \in \mathcal{F}_L(U)$.
Conversely, if $\U A \leq \L A$ for any $A \in \mathcal{F}_L(U)$, then $\U (I_x) \leq \L (I_x)$ for all $x \in U$. 
\end{proof}

`Positive alliance' relations were defined in \cite{Zhu2007} by stating that a binary relation $\rho$ is a
positive alliance if for any elements $x,y \in U$ such that $x \, \rho^c \, y$, there is $z \in U$ satisfying
$x \, \rho \, z$, but $z \, \rho^c \, y$.

It is clear that each reflexive relation is positive alliance, because if $a \, \rho^c \, b$, then
$a \, \rho \, a$ and $a \, \rho^c \, b$ hold trivially.

The following facts can be found in \cite{Ma2015}, but for the sake of completeness we give a proof.

\begin{lemma} Let $\rho$ be a serial and transitive binary relation on $U$. 
Then, the following conditions hold:
\begin{enumerate}[label = {\rm (\arabic*)}]
\item $\rho$ is a positive alliance;
\item for all $X \subseteq U$, $(X^\rho)_\rho \subseteq X^\rho$.
\end{enumerate}
\end{lemma}

\begin{proof} (1) Suppose $a \, \rho^c \, b$. Because $\rho$ is serial, there is $c$ such that $a \, \rho \, c$.
Now $c \, \rho \, b$ is not possible, because that would imply $a \, \rho \, b$, contradicting  $a \, \rho^c \, b$.
Thus, $c \, \rho^c \, b$.

(2) If $\rho$ is a serial binary relation on $U$, then $X_\rho \subseteq X^\rho$ for all $X \subseteq U$. In particular,
$(X^\rho)_\rho \subseteq (X^\rho)^\rho$. If $\rho$ is also transitive, then $(X^\rho)^\rho \subseteq X^\rho$.
Thus, if $\rho$ is serial and transitive,  $(X^\rho)_\rho \subseteq X^\rho$.
\end{proof}

On the other hand, it is clear that if $\rho$ is a positive alliance, then it is serial. However, there are
positive alliance relations that are not transitive.

\begin{example}
Let $U = \{a,b,c\}$ and let $\rho = \{ (a,b), (b,b), (c,a), (c,c) \}$. Then $\rho$ is serial, but not transitive
because $c \, \rho \, a$ and $a \, \rho \, b$, but $c \, \rho^c \, b$. Now we have that:
\begin{itemize}
\item $a \, \rho^c a$, and there is $b$ such $a \, \rho \, b$ and $b \, \rho^c \, a$;
\item $a \, \rho^c c$, and there is $b$ such $a \, \rho \, b$ and $b \, \rho^c \, c$;
\item $b \, \rho^c a$, and there is $b$ such $b \, \rho \, b$ and $b \, \rho^c \, a$;
\item $b \, \rho^c c$, and there is $b$ such $b \, \rho \, b$ and $b \, \rho^c \, c$;
\item $c \, \rho^c b$, and there is $c$ such $c \, \rho \, c$ and $c \, \rho^c \, b$.  
\end{itemize}
Hence, $\rho$ is a positive alliance.
\end{example}

It is clear that for all $x,y \in U$, $y \in \{x\}^\rho \iff y \, \rho \, x$. Therefore,
\[ y \in (\{x\}^\rho)^c \iff y \, \rho^c \, x \qquad \text{and} \qquad y \in  ( (\{x\}^\rho )^c )^\rho \iff
(\exists z) y \, \rho \, z  \ \& \ z \, \rho^c \, x .\]
As proved in \cite{Zhu2007}, $\rho$ is a positive alliance if and only if $(\{x\}^\rho)^c \subseteq ( (\{x\}^\rho  )^c  )^\rho$
for all $x \in U$. Note that the latter condition is equivalent to that $(\{x\}^\rho)_\rho \subseteq \{x\}^\rho$ for all $x \in U$.

\begin{example} \label{exa:alliance}
It is commonly accepted (see \cite{Ma2015,Zhu2007}, for instance) that for any binary relation $\rho$, the following are equivalent:
\begin{enumerate}[label = {\rm (\arabic*)}]
\item $\rho$ is a positive alliance;
\item for all $X \subseteq U$, $(X^\rho)_\rho \subseteq X^\rho$.
\end{enumerate}
It is now clear that (2) implies (1) by the fact that $\rho$ is a positive alliance if and only if  $(\{x\}^\rho)_\rho \subseteq \{x\}^\rho$ 
for all $x \in U$.
Next we give a counter example showing that (1) does not imply (2).

\begin{figure}[ht]
\centering
\includegraphics[width=25mm]{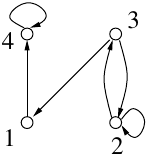}
\caption{\label{Fig:relation} \small
Positive alliance relation $\rho$. 
Elements of $U = \{1,2,3,4\}$ are represented with circles, and if an 
element $x$ is $\rho$-related to an element $y$, there is an arrow from the 
circle representing $x$ to the circle representing $y$.}
\end{figure}

\noindent%
Let the relation $\rho$ be given in Figure~\ref{Fig:relation}. 
The relation $\rho$ is a positive alliance, because:
\begin{itemize}
\item $1 \, \rho^c 1$, and there is $4$ such $1 \, \rho \, 4$ and $4 \, \rho^c \, 1$;
\item $1 \, \rho^c 2$, and there is $4$ such $1 \, \rho \, 4$ and $4 \, \rho^c \, 2$;
\item $1 \, \rho^c 3$, and there is $4$ such $1 \, \rho \, 4$ and $4 \, \rho^c \, 3$;
\item $2 \, \rho^c 1$, and there is $2$ such $2 \, \rho \, 2$ and $2 \, \rho^c \, 1$;
\item $2 \, \rho^c 4$, and there is $2$ such $2 \, \rho \, 2$ and $2 \, \rho^c \, 4$;
\item $3 \, \rho^c 3$, and there is $1$ such $3 \, \rho \, 1$ and $1 \, \rho^c \, 3$;
\item $3 \, \rho^c 4$, and there is $2$ such $3 \, \rho \, 2$ and $2 \, \rho^c \, 4$;
\item $4 \, \rho^c 1$, and there is $4$ such $4 \, \rho \, 4$ and $4 \, \rho^c \, 1$;
\item $4 \, \rho^c 2$, and there is $4$ such $4 \, \rho \, 4$ and $4 \, \rho^c \, 2$;
\item $4 \, \rho^c 3$, and there is $4$ such $4 \, \rho \, 4$ and $4 \, \rho^c \, 3$.  
\end{itemize}
We have that
\[ \rho(1) = \{4\}, \quad \rho(2) = \{2,3\}, \quad \rho(3) = \{1,2\}, \quad \rho(4) = \{4\}. \]
Let us consider the set $\{3,4\}$. Now $\{3,4\}^\rho = \{1,2,4\}$ and $(\{3,4\}^\rho)_\rho = \{1,2,4\}_\rho = \{1,3,4\}$. This means that
\[ (\{3,4\}^\rho)_\rho \nsubseteq \{3,4\}^\rho. \]
This obviously implies that there is a positive alliance $\rho$ such that
\[ (\{3,4\}^\rho)^c \nsubseteq ( (\{3,4\}^\rho)_\rho )^c =  ((\{3,4\}^\rho)^c)^\rho ,\]
contradicting Theorem~2(5) in \cite{Ma2015}.
\end{example}

For $L$-relations, we can present the following generalized definition.

\begin{definition}
An $L$-relation $R$ is said to be a \emph{positive alliance} if
\[ R(x,y)' \leq {\bigvee}_z (R(x,z) \wedge R(z,y)' ). \]
\end{definition}
We can now write the following lemma characterizing positive alliance relations in terms of the approximations of the identity functions.
But as in the case of (crisp) binary relations, we obviously
cannot present a full correspondence result.

\begin{lemma} \label{lem:alliance}
An $L$-relation $R$ is a positive alliance if and only if for all $x \in U$, $\U (I_x) \geq \L \U (I_x)$.
\end{lemma}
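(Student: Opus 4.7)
My plan is to mimic the argument given in Lemma~\ref{lem:Euc} (the Euclidean case), since the positive alliance condition is the De~Morgan dual of that Euclidean condition and the approximation operators appearing in the characterisation are the same combination $\L\U$.

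First I would fix $x,y \in U$ and rewrite both sides pointwise. Using the identity $\U(I_y)(x) = R(x,y)$ already noted in the text, the right-hand operator is
\[
\L\U(I_y)(x) = {\bigwedge}_z \big( R(x,z)' \vee \U(I_y)(z) \big) = {\bigwedge}_z \big( R(x,z)' \vee R(z,y) \big).
\]
So the condition $\U(I_y) \geq \L\U(I_y)$, read at the point $x$, says
\[
R(x,y) \ \geq \ {\bigwedge}_z \big( R(x,z)' \vee R(z,y) \big).
\]

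Next I would apply the antitone involution $'$ to both sides and push it through the meet using the De~Morgan laws and involutivity ($R(z,y)'' = R(z,y)$). This transforms the inequality into
\[
R(x,y)' \ \leq \ {\bigvee}_z \big( R(x,z)'' \wedge R(z,y)' \big) = {\bigvee}_z \big( R(x,z) \wedge R(z,y)' \big),
\]
which is precisely the positive alliance condition at the pair $(x,y)$. Since $'$ is an involution, each of these rewriting steps is reversible, so the two conditions are equivalent pair by pair; quantifying over all $x,y$ yields the lemma.

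I do not expect any serious obstacle: the proof is purely computational and uses only the explicit formulas $\U(I_y)(x) = R(x,y)$, the definition of $\L$, and the fact that in a De~Morgan Heyting algebra $'$ is an antitone involution satisfying $(a \wedge b)' = a' \vee b'$ and $a'' = a$. The only thing worth being careful about is that the equivalence between $\U(I_x) \geq \L\U(I_x)$ for all $x$ and the positive alliance inequality for all pairs $(x,y)$ is quantifier-wise correct, which is immediate once one observes that the former is a pointwise inequality in both arguments.
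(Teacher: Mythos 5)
Your proposal is correct and follows essentially the same route as the paper's proof: compute $\L\U(I_y)(x) = \bigwedge_z (R(x,z)' \vee R(z,y))$, apply the antitone involution to turn the pointwise inequality $\U(I_y)(x) \geq \L\U(I_y)(x)$ into $R(x,y)' \leq \bigvee_z (R(x,z) \wedge R(z,y)')$, and note the steps are reversible. No gaps.
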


\begin{proof}
Let $x,y \in I$. We have
\[ \L \U (I_y)(x)  = {\bigwedge}_z (R(x,z)' \vee \U (I_y(z)) ) = {\bigwedge}_z \big (R(x,z)' \vee R(z,y) \big ) .\]
Thus,
\[ (\L \U (I_y)(x) )'  = \Big ( {\bigwedge}_z (R(x,z)' \vee R(z,y)) \Big )' = {\bigvee}_z \big (R(x,z) \wedge R(z,y)' \big ) .\]
Now, $\U (I_x) \geq \L \U (I_x)$ for all $x \in U$ if and only if $\U (I_y)(x) \geq \L \U (I_y)(x)$ for all $x,y \in U$ if and only if
$(\U (I_y)(x))' \leq (\L \U ((I_y)(x))'$ for all $x,y \in U$, and the equivalence follows from this. 
\end{proof}

\subsection{Serial relations}
\label{subsec:serial}

A binary relation $\rho$ on $U$ is serial if for all $x \in U$,
there exists $y \in U$ such that $x \, R \, y$. It is known that
being serial is equivalent to the fact that $X_\rho \subseteq X^\rho$
for all $X \subseteq U$. 

Our aim of this section is to find a definition for serial $L$-relations 
on $U$ such that a relation is serial if and only if
\begin{equation} \label{eq:inclusion}
(\forall A \in \mathcal{F}_L(U)) \, \L A \leq \U A.    
\end{equation}
In this subsection, we consider three definitions.
Let us begin with the following one.

\begin{definition} \label{def:serial1}
An $L$-relation is \emph{serial} if 
$(\forall x\in U)(\exists y \in U) \, R(x,y) = 1$.
\end{definition}

\begin{lemma} \label{lem:serial_implication}
If an $L$-relation is serial in the sense of Definition~\ref{def:serial1},
then \eqref{eq:inclusion} holds.
\end{lemma}

\begin{proof} Let $x \in U$. There exists $z \in U$ such that $R(x,z) = 1$. Now
  \[ \L(A)(x) = {\bigwedge}_y \big ( R(x,y)' \vee A(y) \big ) \leq R(x,z)' \vee A(z) = 1' \vee A(z) = 0 \vee A(z) = A(z) .\]
On the other hand,
\[ \U(A)(x) = {\bigvee}_y \big ( R(x,y) \wedge A(y) \big ) \geq R(x,z) \wedge A(z) = 1 \wedge A(z) = A(z) .\]
We have that $\L(A)(x) \leq A(z) \leq \U(A)(x)$ and the claim is proved.
\end{proof}

The converse implication does not hold, as shown in our following example.

\begin{example} \label{exa:serial_not_one}
Let us consider the 4-element lattice $L := 0 < a,b < 1$,
where $a$ and $b$ are incomparable. If we define $0'=1$, $a'=b$, $b'=a$, and $1' = 0$, then $\mathbf{L}$ forms a complete
De~Morgan Heyting algebra. 

Note that $L$ is now a Boolean algebra. This means that for all elements
$x$ and $y$ of $L$, the join $x' \vee y$ coincides with
$x \Rightarrow y$, where $\Rightarrow$ is the Heyting implication of $L$.  Hence, for any $L$-relation $R$, 
$\L(A) = {\bigwedge}_y (R(x,y) \Rightarrow A(y))$. Therefore, 
we can apply Proposition~3 of \cite{radzikowska2004fuzzy}.

Let $U = \{x,y\}$ and define $R(x,x) = R(y,y) = a$ and 
$R(x,y) = R(y,x) = b$. Thus, $R(x,x) \vee R(x,y) = a \vee b = 1$
and $R(y,y) \vee R(y,x) = 1$. Now  \cite[Proposition~3]{radzikowska2004fuzzy}
implies that $\L A \leq \U A$ for all $A \in \mathcal{F}_A$. 
But now $R$ is not serial in the sense of Definition~\ref{def:serial1}.
\end{example}

We have now showed that seriality of Definition~\ref{def:serial1} is not equivalent to condition \eqref{eq:inclusion}. Let us try a different 
approach, expressed in the following definition.

\begin{definition} \label{def:serial2}
 An $L$-relation is \emph{serial} if
\[
 (\forall x,y \in U) \, R(x,y)' \leq {\bigvee}_{z \neq y} R(x,z).
\]   
\end{definition}
Obviously, also this definition can be seen as a fuzzy generalization
of serial binary relations. Indeed, let $\rho$ be a serial binary relation 
on $U$. If $x \, \rho \, y$ does not hold some $y$, then there exists 
$z \neq y$ such that $x \, R \, z$ holds. 

\begin{proposition}
An $L$-fuzzy relation is serial in the sense of 
Definition~\ref{def:serial2} if and only if   
\begin{equation} \label{eq:inclusion_for_x}
(\forall x \in U) \, \L(I_x) \leq \U(I_x).  
\end{equation}
\end{proposition}

\begin{proof} For all $x,y \in U$, 
\[ \L(I_y)(x) = {\bigwedge}_z  ( R(x,z)' \vee I_y(z) ). \] 
If $z \neq y$, $R(x,z)' \vee I_y(z) = R(x,y)'$ and
if $z = y$, then $R(x,z)' \vee I_y(z) = 1$. Thus,
\[
\L(I_y)(x) = \big ( {\bigwedge}_{z \neq y} R(x,z)' \big ) \wedge 1 = 
\big ( {\bigwedge}_{z \neq y} R(x,z)' \big ) =
\big ( {\bigvee}_{z \neq y} R(x,z) \big )'. 
\]
On the other hand, by \eqref{eq:Upper_I_y}, $\U(I_y)(x) = R(x,y)$
for all $x,y \in U$. Therefore, \eqref{eq:inclusion_for_x}
is equivalent to 
 
\[ (\forall x,y \in U) \, \Big ( {\bigvee}_{z \neq y} R(x,z) \Big )' \leq R(x,y). \]
By applying the operation $'$ to both sides of the inequality, we
obtain the condition of Definition~\ref{def:serial2}.
\end{proof}

Unfortunately, \eqref{eq:inclusion_for_x} is not equivalent to
\eqref{eq:inclusion}, as can be seen in the following example.

\begin{example}
Let $L := 0 < a,b < 1$, where $a$ and $b$ are incomparable. 
Define $0' = 1$, $a' = a$, $b' = b$, and $1' = 0$. We set $U = \{x,y\}$
and define the $L$-relation $R$ on $U$ by $R(x,x) = R(x,y) = a$ and $R(y,x) = R(y,y) = b$.

Let us consider the $L$-set $\mathbf{0}$ defined by $x \mapsto 0$. We have that
\begin{gather*}
  \L (\mathbf{0})(x) = {\bigwedge}_w (R(x,w)' \vee \mathbf{0}(w)) = 
  (R(x,x)' \vee \mathbf{0}(x)) \wedge (R(x,y)' \vee \mathbf{0}(y)) = \\
  R(x,x)' \wedge R(x,y)' = a' \vee a' = a \vee a = a
\end{gather*}
and
\begin{gather*}
 \U(\mathbf{0})(x) = {\bigvee}_w (R(x,w) \wedge \mathbf{0}(w)) = 0.  
\end{gather*}
This means that $\L (\mathbf{0}) \nleq  \U(\mathbf{0})$. On the other hand,
\begin{align*}
  \U(I_x)(x) &= R(x,x) = a, \\
  \U(I_x)(y) &= R(y,x) = b, \\
  \U(I_y)(x) &= R(x,y) = a, \\
  \U(I_y)(y) &= R(y,y) = b
\end{align*}
and
\begin{align*}
  \L(I_x)(x) &= {\bigwedge}_z ( R(x,z)' \vee I_x(z)) = (R(x,x)' \vee I_x(x)) \wedge (R(x,y)' \vee I_x(y)) \\
             &= 1 \wedge R(x,y)' = R(x,y)' = a' = a, \\
  \L(I_x)(y) &= {\bigwedge}_z ( R(y,z)' \vee I_x(z)) = (R(y,x)' \vee I_x(x)) \wedge (R(y,y)' \vee I_x(y)) \\
             &= 1 \wedge R(y,y)' = R(y,y)' = b' = b, \\
  \L(I_y)(x) &= {\bigwedge}_z (R(x,z)' \vee I_y(z)) = (R(x,x)' \vee I_y(x)) \wedge (R(x,y)' \vee I_y(y)) \\
             &= R(x,x)' \wedge 1 = R(x,x)' = a' = a, \\
  \L(I_y)(y) &= {\bigwedge}_z (R(y,z)' \vee I_y(z)) = (R(y,x)' \vee I_y(x)) \wedge (R(y,y)' \vee I_y(y)) \\
             &= R(y,x)' \wedge 1 = R(y,x)' = b' = b. 
\end{align*}
We have shown that $\L(I_x) \leq \U(I_x)$ for all $x \in U$. This means 
that \eqref{eq:inclusion_for_x} is not equivalent to \eqref{eq:inclusion}.
\end{example}

In case of binary relations, it is also true that a relation
$\rho$ is serial if and only if $U^\rho = U$. We end this section
by showing that we can have a simple correspondence generalizing this.

\begin{definition} \label{def:serial3}
An $L$-relation $R$ is \emph{serial} if 
$(\forall x \in U) \, \bigvee_y R(x,y) = 1$. 
\end{definition}

Now we can write to following characterization.

\begin{lemma} 
An $L$-relation $R$ is serial if and only if $\U(\mathbf{1}) = \mathbf{1}$.
\end{lemma}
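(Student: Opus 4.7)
The proof is essentially an unwinding of definitions, so the plan is simply to compute $\U(\mathbf{1})(x)$ pointwise and compare with the defining condition for seriality. There is no real obstacle here; the only care needed is to note that $\mathbf{1}(y) = 1$ for every $y \in U$ so that the wedge simplifies.

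Concretely, I would fix an arbitrary $x \in U$ and write
\[
\U(\mathbf{1})(x) = {\bigvee}_y \big(R(x,y) \wedge \mathbf{1}(y)\big) = {\bigvee}_y \big(R(x,y) \wedge 1\big) = {\bigvee}_y R(x,y).
\]
Since $\mathbf{1}$ is by definition the $L$-set constantly equal to $1$, the equation $\U(\mathbf{1}) = \mathbf{1}$ is equivalent to $\bigvee_y R(x,y) = 1$ for every $x \in U$, which is precisely the definition of $R$ being serial. Both implications fall out of this single chain of equalities, so I would present the argument as a two-line computation followed by the observation that the resulting pointwise identity is the seriality condition. No induction, no appeal to Proposition~\ref{Prop:Basic} or Lemma~\ref{lem:bar_prop}, and no use of the De~Morgan Heyting structure beyond the fact that $a \wedge 1 = a$ is required.
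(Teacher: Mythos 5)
Your proposal is correct and is essentially identical to the paper's own proof: both compute $\U(\mathbf{1})(x) = \bigvee_y (R(x,y) \wedge \mathbf{1}(y)) = \bigvee_y R(x,y)$ and observe that the pointwise equality with $1$ is exactly the seriality condition. Nothing to add.
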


\begin{proof} Because
  \[ \U(\mathbf{1})(x) = {\bigvee}_y (R(x,y) \wedge \mathbf{1}(y)) = {\bigvee}_y R(x,y) , \]
the claim is obvious.  
\end{proof}

It is clear that the seriality of Definition~\ref{def:serial1} implies 
the seriality of Definition~\ref{def:serial3}.
The converse is not true. For instance, the $L$-relation $R$ of 
Example~\ref{exa:serial_not_one} is serial in the sense of
Definition~\ref{def:serial3}, but 
$R(x,y) \neq 1$ for all $x,y \in U$.

We also proved in Lemma~\ref{lem:serial_implication} that
the seriality of Definition~\ref{def:serial1} implies 
\eqref{eq:inclusion}. Our final example of this subsection
shows that seriality of Definition~\ref{def:serial3} does
not imply \eqref{eq:inclusion}.

\begin{example} \label{Exa:serial1}
Let $L$ be the 4-element lattice $0 < a,b < 1$, where $a$ and $b$ 
are incomparable. If we define $0'=1$, $a'=a$, $b'=b$, and $1' = 0$, 
then we have a complete De~Morgan Heyting algebra. 
Let $U = \{x,y\}$ and define 
$R(x,x) = R(y,y) = a$ and $R(x,y) = R(y,x) = b$. 
It is now clear that $R$ is serial in the sense of 
Definition~\ref{def:serial3}, because $R(x,x) \vee R(x,y) = a \vee b = 1$ and $R(y,x) \vee R(y,y) = b \vee a = 1$.

Let $U = \{a,b\}$ and define an $L$-set $A \colon U \to L$ by $A(x) = b$ and $A(y) = a$. Now
\begin{gather*} \L(A) (x) = (R(x,x)' \vee A(x)) \wedge (R(x,y)' \vee A(y)) = (a' \vee b) \wedge (b' \vee a) \\
  = (a \vee b) \wedge (b \vee a) = 1 \wedge 1 = 1.
\end{gather*}
and
\begin{gather*}
  \U(A) (x) = (R(x,x) \wedge A(x)) \vee (R(x,y) \wedge A(y)) = (a \wedge b) \vee (b \wedge a) = 0 \vee 0 = 0.
\end{gather*}
Therefore, $\L(A) \nleq \U(A)$.
\end{example}

\section{From {\em L}-approximations to {\em L}-relations}
\label{Sec:Axiomatization}

In the previous section, we defined the $L$-approximations $\L A$ and $\U A$ for any $A \in \mathcal{F}_L(U)$ in terms of an $L$-relation $R$ on $U$. 
In this section, we consider a converse problem, that is, whether we can 
define an $L$-relation of certain type
for a dual pair of $L$-fuzzy operators. As we already noted in 
Section~\ref{subsec:demorgan_heyting}, for every $L$-relation $R$ on $U$, 
we have 
\[ \U (I_y)(x) = R(x,y), \]
for all $x, y\in U$ and  $A \in \mathcal{F}_L(U)$. This provides a `rule' for defining relations when upper approximations are known.
We call any map on $\mathcal{F}_L(U)$ as an \emph{$L$-fuzzy operator} on $U$. 

\begin{lemma} \label{lem:equations}
Let $\UU$ be an $L$-fuzzy operator. For any $a \in L$, $A \in \mathcal{F}_L(U)$ and $\{A_i\}_{i\in I} \subseteq \mathcal{F}_L(U)$ the following are equivalent:
\begin{enumerate}[label = {\rm (\arabic*)}]
\item $\UU(A) = \bigvee_x \big (\overline{A(x)} \wedge \UU (I_x) \big )$;
\item $\UU (\overline{a} \wedge \bigvee_i A_i) = \overline{a} \wedge \bigvee_i \UU(A_i)$.
\end{enumerate}
\end{lemma}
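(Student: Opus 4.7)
The plan is to prove the two implications separately, with the representation $A = \bigvee_x (\overline{A(x)} \wedge I_x)$ from equation \eqref{eq:widehat} serving as the bridge between the two formulations.

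For the direction (2) $\Rightarrow$ (1), I would first extract two consequences of (2). Setting $a = 1$ makes $\overline{a} = \mathbf{1}$ collapse, giving $\UU(\bigvee_i A_i) = \bigvee_i \UU(A_i)$, so $\UU$ preserves arbitrary joins. Applying (2) to the singleton family $\{A_1\} = \{I_x\}$ with $a = A(x)$ yields $\UU(\overline{A(x)} \wedge I_x) = \overline{A(x)} \wedge \UU(I_x)$. Then I would chain these: starting from $A = \bigvee_x (\overline{A(x)} \wedge I_x)$, pull $\UU$ across the join, and distribute $\overline{A(x)}$ out of each summand, obtaining exactly (1).

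For the direction (1) $\Rightarrow$ (2), I would set $B = \overline{a} \wedge \bigvee_i A_i$ and apply (1) directly. The main bookkeeping step is identifying $\overline{B(x)}$: since $B(x) = a \wedge \bigvee_i A_i(x)$ and the bar operation is compatible with meets and joins (constant functions form a sublattice isomorphic to $L$), one has $\overline{B(x)} = \overline{a} \wedge \bigvee_i \overline{A_i(x)}$. Substituting this back and invoking the (JID) law of the complete De~Morgan Heyting algebra $\mathcal{F}_L(U)$, the right-hand side of (1) rewrites as
\[
\overline{a} \wedge {\bigvee}_i {\bigvee}_x \big(\overline{A_i(x)} \wedge \UU(I_x)\big),
\]
after swapping the order of the two joins. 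Applying (1) a second time to each $A_i$ recognizes the inner expression as $\UU(A_i)$, yielding (2).

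There is no serious obstacle; the entire argument reduces to the manipulation of constant $L$-sets together with the distributivity (JID), both of which are guaranteed by the ambient De~Morgan Heyting algebra structure. The step to watch carefully is the one that interchanges the bar and the join, namely $\overline{\bigvee_i A_i(x)} = \bigvee_i \overline{A_i(x)}$, since on the left the join is over elements of $L$ (evaluated at a fixed $x$) while on the right it is a join in $\mathcal{F}_L(U)$; these agree because constant $L$-sets form a complete sublattice. Once this identification is clear, both implications amount to a short distributive computation.
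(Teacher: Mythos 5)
Your proposal is correct and follows essentially the same route as the paper: both directions hinge on the representation $A = \bigvee_x (\overline{A(x)} \wedge I_x)$, the compatibility of the bar operation with meets and joins, and (JID), with (2)$\Rightarrow$(1) obtained by specializing (2) to $a=1$ and to singleton families $\{I_x\}$. The only difference is presentational — the paper proves (1)$\Rightarrow$(2) by first establishing $\UU(\overline{a}\wedge A)=\overline{a}\wedge\UU(A)$ and $\UU(\bigvee_i A_i)=\bigvee_i\UU(A_i)$ separately and then combining them, whereas you carry out the same computation in a single pass on $B=\overline{a}\wedge\bigvee_i A_i$.
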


\begin{proof}
(1)$\Rightarrow$(2): Suppose that (1) holds. Then,
\begin{align*}
\UU (\overline{a} \wedge A) &= {\bigvee}_x \big (\overline{(\overline{a} \wedge A)(x)} \wedge \UU (I_x) \big ) 
= {\bigvee}_x \big (\overline{a \wedge A(x)} \wedge \UU (I_x) \big ) \\
&= {\bigvee}_x \big ( \overline{a} \wedge \overline{A(x)}) \wedge \UU(I_x) \big )  
= \overline{a} \wedge {\bigvee}_x \big (\overline{A(x)}) \wedge \UU(I_x) \big )  \\
&= \overline{a} \wedge \UU(A)
\end{align*}
and 
\begin{align*}
\UU \big ({\bigvee}_i A_i \big ) &= {\bigvee}_x \big (\overline{ \big ({\bigvee}_i A_i \big )(x)} \wedge \UU (I_x) \big ) 
= {\bigvee}_x \big ({\bigvee}_i \overline{ (A_i)(x)} \wedge \UU (I_x) \big ) \\
&= {\bigvee}_x {\bigvee}_i \left (\overline{A_i(x)} \wedge \UU (I_x) \right)  
= {\bigvee}_i {\bigvee}_x \left (\overline{A_i(x)} \wedge \UU (I_x) \right)  \\
&= {\bigvee}_i \, \UU (A_i).  
\end{align*}
Combining these two, we obtain that
\[ \UU \left ( \overline{a} \wedge {\bigvee}_i A_i \right ) = \overline{a} \wedge \UU \left ({\bigvee}_i A \right ) = \overline {a} \wedge {\bigvee}_i \, \UU (A_i).\]

\noindent%
(2)$\Rightarrow$(1): Suppose that (2) holds. Since $A = \bigvee_x (\overline{A(x)} \wedge I_x)$, we have that
\begin{align*}
\UU (A) & = \UU \left ( {\bigvee}_x \left ( \overline{A(x)} \wedge I_x \right ) \right ) 
 = \UU \left ( \overline{1} \wedge {\bigvee}_x \left (\overline{A(x)} \wedge I_x \right ) \right ) \\  
& = \overline{1} \wedge {\bigvee}_x \, \UU \left (\overline{A(x)} \wedge I_x \right ) 
 = {\bigvee}_x \UU \left (\overline{A(x)} \wedge I_x \right ) \\
&= {\bigvee}_x \left ( \overline{A(x)} \wedge \UU (I_x) \right ).
\qedhere
\end{align*}
\end{proof}

\begin{theorem} \label{thm:main1}
Let $\UU$ be an $L$-fuzzy operator on $U$ and let each $\SS_j$, $1 \leq j \leq n$, be an $L$-fuzzy operator on $U$ such that $\UU \leq S_j$.
Then there exists a unique $L$-fuzzy relation $R$ on $U$ such that $\U(A) = \UU(A)$ for all $A \in \mathcal{F}_L(U)$
if and only if
\begin{equation}\label{eq:multiple} 
\UU(\overline{a} \wedge {\bigvee}_i A_i) = \overline{a} \wedge {\bigvee}_i \left (\UU(A_i) \wedge \SS_1(A_i) \wedge \cdots \wedge \SS_n(A_i) \right )
\end{equation}
for $a \in L$ and $\{A_i\}_{i \in I} \subseteq \mathcal{F}_L(U)$. 
\end{theorem}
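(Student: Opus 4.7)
The plan is to reduce the statement, via the hypothesis $\UU \leq \SS_j$, to condition~(2) of Lemma~\ref{lem:equations}, which then unlocks the standard reconstruction of the relation from $\UU(I_x)$.

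For the forward implication, I assume $\U A = \UU(A)$ for all $A \in \mathcal{F}_L(U)$ and some $L$-relation $R$. Then Proposition~\ref{Prop:Basic}(3) gives $\U({\bigvee}_i A_i) = {\bigvee}_i \U(A_i)$, and Lemma~\ref{lem:bar_prop}(2) gives $\U(\overline{a} \wedge B) = \overline{a} \wedge \U(B)$; combining these yields
\[
\UU(\overline{a} \wedge {\bigvee}_i A_i) = \overline{a} \wedge {\bigvee}_i \UU(A_i).
\]
Since $\UU(A_i) \leq \SS_j(A_i)$ for every $j$, we have $\UU(A_i) \wedge \SS_1(A_i) \wedge \cdots \wedge \SS_n(A_i) = \UU(A_i)$, so the right-hand side equals $\overline{a} \wedge {\bigvee}_i \bigl(\UU(A_i) \wedge \SS_1(A_i) \wedge \cdots \wedge \SS_n(A_i)\bigr)$, as required.

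For the backward implication, assume \eqref{eq:multiple} holds. Using $\UU \leq \SS_j$ once more, the meet of $\UU(A_i)$ with the $\SS_j(A_i)$'s collapses to $\UU(A_i)$, and \eqref{eq:multiple} simplifies to $\UU(\overline{a} \wedge {\bigvee}_i A_i) = \overline{a} \wedge {\bigvee}_i \UU(A_i)$, which is exactly condition~(2) of Lemma~\ref{lem:equations}. Hence condition~(1) of that lemma holds, giving $\UU(A) = {\bigvee}_x \bigl(\overline{A(x)} \wedge \UU(I_x)\bigr)$. I then define $R(x,y) := \UU(I_y)(x)$ and check that the induced upper approximation satisfies
\[
\U(A)(x) = {\bigvee}_y \bigl(R(x,y) \wedge A(y)\bigr) = {\bigvee}_y \bigl(\UU(I_y)(x) \wedge A(y)\bigr) = \UU(A)(x),
\]
where the last equality is the pointwise form of condition~(1) of Lemma~\ref{lem:equations}.

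Uniqueness is immediate: if $R$ and $R'$ both induce $\UU$, then $R(x,y) = \U(I_y)(x) = \UU(I_y)(x) = R'(x,y)$. The only nonobvious point in the whole argument is the observation that the auxiliary operators $\SS_1,\dots,\SS_n$ contribute nothing new under the hypothesis $\UU \leq \SS_j$; everything else is routine once Lemma~\ref{lem:equations} is applied.
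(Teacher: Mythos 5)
Your proof is correct and follows essentially the same route as the paper: both arguments rest on the observation that $\UU \leq \SS_j$ makes the meet $\UU(A_i)\wedge\SS_1(A_i)\wedge\cdots\wedge\SS_n(A_i)$ collapse to $\UU(A_i)$, reducing \eqref{eq:multiple} to the distributivity condition, and both then recover $R$ via $R(x,y)=\UU(I_y)(x)$ and the decomposition $A=\bigvee_x(\overline{A(x)}\wedge I_x)$. The only (stylistic) difference is that you route the backward direction explicitly through Lemma~\ref{lem:equations}, whereas the paper inlines that computation.
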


\begin{proof} 
$(\Rightarrow)$ Since $\UU \leq \SS_j$, for all $j \leq n$, we have $\UU \leq \SS_1 \wedge \cdots \wedge \SS_n$ and hence
\[ \UU = \UU \wedge \SS_1 \wedge \cdots \wedge \SS_n .\]
Suppose that there is an $L$-relation $R$ such that $\UU(A) = \U(A)$ for all $A \in \mathcal{F}_L(U)$. Then,    
\begin{align*}
  \UU \big (\overline{a}\wedge {\bigvee}_i A_i \big ) & = \U \big (\overline{a} \wedge {\bigvee}_i A_i \big )
  = \Big (\overline{a} \wedge \big ({\bigvee}_i \U (A_i) \big ) \Big ) = \overline{a}\wedge {\bigvee}_i \U (A_i) \\
& =\overline{a}\wedge {\bigvee}_i \UU (A_i) = \overline{a} \wedge {\bigvee}_i (\UU (A_i) \wedge \SS_1 (A_i) \wedge \cdots \wedge \SS_n(A_i)).
\end{align*}

\noindent%
$(\Leftarrow)$ Suppose that \eqref{eq:multiple} holds for $a \in L$ and $\{A_i\}_{i \in I} \subseteq \mathcal{F}_L(U)$. This gives
\[ \UU \big ( {\bigvee}_i \, A_i \big ) = \UU \big (\overline{1} \wedge {\bigvee}_i \, A_i \big ) =
\overline{1} \wedge {\bigvee}_i \, \UU(A_i) = {\bigvee}_i \, \UU(A_i) .\]
and $\UU (\overline{a} \wedge A) = \overline{a} \wedge \UU(A)$ for all $a \in L$ and $A \in \mathcal{F}_L(U)$, which are essential properties
of upper approximation operators. 

Let us define an $L$-relation $R$ on $U$ by setting for all $x,y \in U$, 
\[ R(x,y) = \UU(I_y)(x) .\]
Then, $\U (I_y)(x) = R(x,y) = \UU(I_y)(x)$ for all $x,y \in U$, and hence $\U(I_x) = \UU(I_x)$ for all $x\in U$. For $A \in \mathcal{F}_L(U)$, 
\begin{align*}
\UU(A) &= \UU \big ({\bigvee}_x \Big ( \overline{A(x)} \wedge I_x \big ) \Big )
= {\bigvee}_x \UU \big (\overline{A(x)} \wedge I_x \big ) \\
&= {\bigvee}_x  (\overline{A(x)} \wedge \UU(I_x)  ) 
= {\bigvee}_x (\overline{A(x)} \wedge \U (I_x) ) \\
&= {\bigvee}_x \U \big (\overline{A(x)} \wedge I_x \big )
= \U \Big ({\bigvee}_x \big (\overline{A(x)} \wedge I_x \big) \Big) \\
&= \U A.
\end{align*}
It is also clear that the induced relation $R$ is unique, since if there is an $L$-relation $Q$ such that $\U_Q (A) = \UU (A)$ for all $A \in \mathcal{F}_L(U)$, we have
$R(x,y) = \UU(I_y)(x) = \U_Q (I_y)(x) = Q(x,y)$ for all $x,y\in U$ and thus $R=Q$. 
\end{proof}

Let $\UU$ and $\LL$ be $L$-fuzzy operators. 
We say that $\UU$ and $\LL$ are \emph{dual} if $\UU(A') = (\LL(A))'$ for all $A \in \mathcal{F}_L(U)$. \label{page:dual}
Note that this is equivalent to $\LL(A') = (\UU(A))'$ for all $A \in \mathcal{F}_L(U)$. Notice also that we used the notion of dual operations already in Section~\ref{Sec:Preliminaries}, even it was not defined there. The dual operators define each other uniquely, that is,
\[ \UU(A) = (\LL(A'))' \quad \text{and} \quad \LL(A) = (\UU(A'))' , \]
for all $A \in \mathcal{F}_L(U)$. 

\begin{remark}
Suppose that the $L$-fuzzy operators $\UU$ and $\LL$ on $U$ are dual. Theorem~\ref{thm:main1} can be equivalently expressed in terms of $\LL$. Indeed, let
for any $1 \leq j \leq n$, each $\SS_j$ be an $L$-fuzzy operator on $U$ such that $S_j(A) \leq \LL(A)$ for any $A \in \mathcal{F}_L(A)$. Then there exists a unique
$L$-fuzzy relation $R$ on $U$ such that $\L A = \LL (A)$ and 
$\U A = \UU(A)$ for all $A \in \mathcal{F}_L(U)$ if and only if
\[
\LL \big ( \overline{a} \vee {\bigwedge}_i A_i \big ) = \overline{a} \vee {\bigwedge}_i \left( \LL(A_i) \vee \SS_1(A_i) \vee \cdots \vee \SS_n (A_i) \right )  
\]
for all $a\in L$ and  $A \in \mathcal{F}_L(U)$. 
\end{remark}

As a corollary of \ref{thm:main1}, we can get easily the following results, which were presented in \cite{Pang2019} (see Theorems~4.2--4.6). Note that case (5) is a new result.

\begin{corollary}
Let $\UU$ and $\LL$ be dual $L$-fuzzy operators on $U$.
\begin{enumerate}[label = {\rm (\arabic*)}]
\item There exists a unique $L$-relation $R$ on $U$ such that $\UU$ and $\LL$ coincide with the upper and lower approximation operators of $R$, respectively, if and only if
\[
\UU (\overline{a} \wedge {\bigvee}_i A_i) = \overline{a} \wedge {\bigvee}_i \UU(A_i) 
\]
for all $a \in L$ and $\{A_i\}_{i \in I} \subseteq \mathcal{F}_L(U)$. 

\item There exists a unique mediate $L$-relation $R$ on $U$ such that $\UU$ and $\LL$ coincide with the upper and lower approximation operators of $R$, respectively, if and only if
\[
\UU(\overline{a} \wedge {\bigvee}_i A_i) = \overline{a} \wedge {\bigvee}_i   (\UU(A_i) \wedge \UU (\UU(A_i))  ) 
\]
for all $a \in L$ and $\{A_i\}_{i \in I} \subseteq \mathcal{F}_L(U)$. 

\item There exists a unique Euclidean $L$-relation $R$ on $U$ such that $\UU$ and $\LL$ coincide with the upper and lower approximation operators of $R$, respectively, if and only if
\[
\UU(\overline{a} \wedge {\bigvee}_i A_i) = \overline{a} \wedge {\bigvee}_i  (\UU(A_i) \wedge \LL (\UU(A_i))   ) 
\]
for all $a \in L$ and $\{A_i\}_{i \in I} \subseteq \mathcal{F}_L(U)$.

\item There exists a unique adjoint $L$-relation $R$ on $U$ such that $\UU$ and $\LL$ coincide with the upper and lower approximation operators of $R$, respectively, if and only if
\[
\UU(\overline{a} \wedge {\bigvee}_i A_i) = \overline{a} \wedge {\bigvee}_i   (\UU(A_i) \wedge \UU (\LL(A_i))   ) 
\]
for all $a \in L$ and $\{A_i\}_{i \in I} \subseteq \mathcal{F}_L(U)$.

\item There exists a unique functional $L$-relation $R$ on $U$ such that $\UU$ and $\LL$ coincide with the upper and lower approximation operators of $R$, respectively, if and only if
\[
\UU(\overline{a} \wedge {\bigvee}_i A_i) = \overline{a} \wedge {\bigvee}_i  (\UU(A_i) \wedge \LL(A_i)  ) 
\]
for all $a \in L$ and $\{A_i\}_{i \in I} \subseteq \mathcal{F}_L(U)$. 
\end{enumerate}
\end{corollary}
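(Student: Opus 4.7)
The plan is to deduce each of the five equivalences directly from Theorem~\ref{thm:main1} by making an appropriate choice of the auxiliary operators $\SS_j$, combined with the characterizations of mediate, Euclidean, adjoint, and functional $L$-relations established in Propositions~\ref{prop:mediate}, \ref{prop:Eucl}, \ref{prop:adjoint}, and \ref{prop:func}. Concretely, for part (1) I would take $n=1$ with $\SS_1 = \UU$ (so $\UU \leq \SS_1$ is trivial and $\UU \wedge \SS_1 = \UU$); for the remaining parts I would take a single auxiliary operator, namely $\SS_1 = \UU \circ \UU$ for (2), $\SS_1 = \LL \circ \UU$ for (3), $\SS_1 = \UU \circ \LL$ for (4), and $\SS_1 = \LL$ for (5). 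Note that in (3)--(5) the operator $\LL$ is available because $\UU$ and $\LL$ are assumed dual.

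For the forward direction in each case, I would assume that there is an $L$-relation $R$ of the specified type such that $\UU$ and $\LL$ are the upper and lower $L$-approximation operators of $R$. The relevant proposition of Section~3 then yields $\UU(A) = \U A \leq \SS_1(A)$ for every $A \in \mathcal{F}_L(U)$: for instance, mediacy of $R$ together with Proposition~\ref{prop:mediate}(2) gives $\U A \leq \U\U A$; the Euclidean condition together with Proposition~\ref{prop:Eucl}(2) gives $\U A \leq \L\U A$; and analogously for parts (4) and (5). Hence the hypothesis $\UU \leq \SS_1$ of Theorem~\ref{thm:main1} is met, and the displayed equation is exactly the one supplied by that theorem.

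For the backward direction I would use the following specialization trick. Applying the assumed equation with a one-element index set $I = \{i_0\}$, $A_{i_0} = A$, and $a = 1$ collapses the right-hand side to $\UU(A) \wedge \SS_1(A)$, forcing the inequality $\UU \leq \SS_1$ globally. Theorem~\ref{thm:main1} therefore applies and produces a unique $L$-relation $R$ with $\U A = \UU(A)$ for all $A \in \mathcal{F}_L(U)$, whence also $\L A = \LL(A)$ by duality. The inequality $\U A \leq \SS_1(A)$, read back through the appropriate characterization proposition of Section~3, then shows that $R$ is of the desired type; for example, in case (2) it delivers $\U A \leq \U\U A$, which by Proposition~\ref{prop:mediate} says precisely that $R$ is mediate, and similarly for (3)--(5).

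I expect the whole argument to be essentially a bookkeeping exercise once the correct $\SS_1$ is identified in each case, since Theorem~\ref{thm:main1} already carries all the technical weight. The only step requiring a small idea is the specialization $|I|=1$, $a=1$ used to extract $\UU \leq \SS_1$ from the assumed identity, so that the hypothesis of Theorem~\ref{thm:main1} can be verified from inside the backward direction; this is the main (and quite mild) obstacle.
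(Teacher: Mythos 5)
Your proposal is correct and follows essentially the same route as the paper: each part is obtained from Theorem~\ref{thm:main1} with $n=1$ and the same choices $\SS_1 = \UU$, $\UU\circ\UU$, $\LL\circ\UU$, $\UU\circ\LL$, $\LL$, combined with Propositions~\ref{prop:mediate}, \ref{prop:Eucl}, \ref{prop:adjoint}, and \ref{prop:func}. Your extra step of specializing to $|I|=1$ and $a=1$ to extract $\UU\leq\SS_1$ before invoking the theorem in the backward direction is a sensible piece of bookkeeping that the paper leaves implicit.
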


\begin{proof} (1) This follows directly from Theorem~\ref{thm:main1} by selecting $n = 1$ and $\SS_1 = \U$.

(2) By Proposition~\ref{prop:mediate}, $R$ is mediate if and only if $\U A \leq \U \U A$ for any $A \in \mathcal{F}_L(U)$ from which the result follows by
Theorem~\ref{thm:main1} by setting $n = 1$ and $\SS_1 = \U \U$. 

(3) The relation $R$ is Euclidean if and only if for all $A \in \mathcal{F}_L(U)$, $\U A \leq \L \U A$ by Proposition~\ref{prop:Eucl}. If we set $n = 1$ and
$\SS_1 = \L \U$, the claim follows from Theorem~\ref{thm:main1}.

(4) By Proposition~\ref{prop:adjoint}, $R$ is adjoint if and only if for all $A \in \mathcal{F}_L(U)$, $\U A \leq \U\L A$ and the claim follows from this by
Theorem~\ref{thm:main1}.

(5) The relation $R$ is functional if and only if $\U A \leq \L A$ for any $L$-set $A$ on $U$ by Proposition~\ref{prop:func}. The result is now clear by this.
\end{proof} 

Using Theorem~\ref{thm:main1} we can also give conditions for compositions of properties (cf.~Theorems 4.8, 4.9, 4.10, and 4.12 in  \cite{Pang2019}). The proof
of the following proposition is clear.

\begin{corollary}
Let $\UU$ and $\LL$ be dual $L$-fuzzy operator on $U$.
There exists a unique mediate, Euclidean and adjoint $L$-relation $R$ on $U$ such that $\UU$ and $\LL $ coincide with the upper and lower approximation operators of $R$,
respectively, if and only if
\[
\UU(\overline{a} \wedge {\bigvee}_i A_i) = \overline{a} \wedge {\bigvee}_i (\UU(A_i) \wedge \UU (\UU(A_i)) \wedge \LL(\UU(A_i)) \wedge \UU(\LL(A_i))) 
\]
for all $a \in L$ and $\{A_i\}_{i \in I} \subseteq \mathcal{F}_L(U)$. 
\end{corollary}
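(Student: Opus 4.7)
The plan is to view this statement as a direct instance of Theorem~\ref{thm:main1} with $n = 3$ and the three auxiliary operators $\SS_1 = \UU \circ \UU$, $\SS_2 = \LL \circ \UU$, $\SS_3 = \UU \circ \LL$. The role of Propositions~\ref{prop:mediate}, \ref{prop:Eucl}, and \ref{prop:adjoint} is to translate the three geometric conditions on $R$ (mediate, Euclidean, adjoint) into the three operator inequalities $\UU \leq \UU\UU$, $\UU \leq \LL\UU$, $\UU \leq \UU\LL$, which are exactly the preconditions $\UU \leq \SS_j$ needed to invoke Theorem~\ref{thm:main1}.

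For the $(\Rightarrow)$ direction, I would start by assuming that such an $R$ exists and is simultaneously mediate, Euclidean, and adjoint. The three propositions just cited then give the inequalities $\UU \leq \SS_j$ for $j = 1,2,3$. Theorem~\ref{thm:main1} applied with these three choices of $\SS_j$ immediately yields the displayed equation.

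For the $(\Leftarrow)$ direction, the one nontrivial point is that before invoking Theorem~\ref{thm:main1} I must first verify the preconditions $\UU \leq \SS_j$ from the equation itself. I would specialize the displayed identity to $a = 1$ and to the single-element family $\{A\}$; after cancelling the $\overline{1}$ this collapses to
\[
\UU(A) = \UU(A) \wedge \UU\UU(A) \wedge \LL\UU(A) \wedge \UU\LL(A),
\]
for every $A \in \mathcal{F}_L(U)$, which gives the three required inequalities simultaneously. With the preconditions verified, Theorem~\ref{thm:main1} produces a unique $L$-relation $R$ satisfying $\U A = \UU(A)$ for all $A$; duality of $\UU$ and $\LL$ (noted on page~\pageref{page:dual}) forces $\L A = \LL(A)$ as well. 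Finally, the operator inequalities extracted above, reread through Propositions~\ref{prop:mediate}, \ref{prop:Eucl}, and \ref{prop:adjoint}, show that $R$ is mediate, Euclidean, and adjoint.

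Uniqueness is inherited directly from Theorem~\ref{thm:main1} via the formula $R(x,y) = \UU(I_y)(x)$. There is really no hard step here; the only thing one must be careful about is that the inequalities $\UU \leq \SS_j$ are a standing assumption in Theorem~\ref{thm:main1}, so they have to be recovered from the identity in the $(\Leftarrow)$ direction before the theorem can be cited — which the $a=1$, singleton-family specialization does in one line.
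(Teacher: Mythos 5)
Your proposal is correct and matches the paper's intended argument: the paper declares the proof ``clear,'' meaning exactly the instantiation of Theorem~\ref{thm:main1} with $n=3$, $\SS_1=\UU\UU$, $\SS_2=\LL\UU$, $\SS_3=\UU\LL$, mediated by Propositions~\ref{prop:mediate}, \ref{prop:Eucl}, and \ref{prop:adjoint}. Your extra step of recovering the standing hypotheses $\UU\leq\SS_j$ from the identity (via $a=1$ and a singleton family) before citing the theorem in the $(\Leftarrow)$ direction is a point of care the paper silently omits, and it is handled correctly.
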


In  \cite{Pang2019}, the authors presented the following problem: ``Using single axioms to characterize $L$-fuzzy rough approximation operators corresponding to compositions
of serial, reflexive, symmetric, transitive, mediate, Euclidean and adjoint L-fuzzy relations.'' Next, we solve this problem so that only the parts concerning serial and symmetric relations remains open. 

Our following theorem is closely related to Theorem~\ref{thm:main1}.

\begin{theorem} \label{thm:main2}
Let $\UU$ be an $L$-fuzzy operator on $U$ and let each $\TT_k$, $1 \leq k \leq m$, be an $L$-fuzzy operator on $U$ such that
$\UU \geq \TT_k$. Then there exists a unique $L$-fuzzy relation $R$ on $U$ such that $\U A = \UU(A)$ for all $A \in \mathcal{F}_L(U)$
if and only if
\begin{equation}\label{eq:multiple2} 
\UU(\overline{a} \wedge {\bigvee}_i A_i) = \overline{a} \wedge {\bigvee}_i (\UU(A_i) \vee \TT_1(A_i) \vee \cdots \vee \TT_m(A_i) )
\end{equation}
for $a \in L$ and $\{A_i\}_{i \in I} \subseteq \mathcal{F}_L(U)$. 
\end{theorem}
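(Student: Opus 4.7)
The plan is to dualize the proof of Theorem~\ref{thm:main1}. The structural role of the hypothesis $\UU \le \SS_j$ there was to force $\UU = \UU \wedge \SS_1 \wedge \cdots \wedge \SS_n$; here the dual hypothesis $\UU \ge \TT_k$ forces $\UU = \UU \vee \TT_1 \vee \cdots \vee \TT_m$, after which every step replaces a meet with a join but otherwise proceeds identically.

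For the forward direction, I would suppose $R$ is an $L$-relation with $\U(A) = \UU(A)$ for all $A \in \mathcal{F}_L(U)$. Using Proposition~\ref{Prop:Basic}(3) together with Lemma~\ref{lem:bar_prop}(2), I would compute
\[
\UU(\overline{a} \wedge {\bigvee}_i A_i) = \U(\overline{a} \wedge {\bigvee}_i A_i) = \overline{a} \wedge {\bigvee}_i \U(A_i) = \overline{a} \wedge {\bigvee}_i \UU(A_i).
\]
Then, using $\UU \ge \TT_k$ for each $k$, I would absorb each $\TT_k(A_i)$ into $\UU(A_i)$ to rewrite $\UU(A_i) = \UU(A_i) \vee \TT_1(A_i) \vee \cdots \vee \TT_m(A_i)$, obtaining \eqref{eq:multiple2}.

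For the converse, I would specialize \eqref{eq:multiple2} twice. Setting $a = 1$ and using $\TT_k(A_i) \le \UU(A_i)$ collapses the right-hand side to $\bigvee_i \UU(A_i)$, giving $\UU(\bigvee_i A_i) = \bigvee_i \UU(A_i)$. Taking a single-element family of $A_i$'s and again using $\TT_k \le \UU$ gives $\UU(\overline{a} \wedge A) = \overline{a} \wedge \UU(A)$. These are exactly the two clauses combined in Lemma~\ref{lem:equations}(2), so the lemma yields the representation $\UU(A) = \bigvee_x (\overline{A(x)} \wedge \UU(I_x))$. I would then define $R(x,y) := \UU(I_y)(x)$ and verify $\U(I_y)(x) = R(x,y) = \UU(I_y)(x)$, and extend this to all of $\mathcal{F}_L(U)$ via the same chain of equalities used at the end of Theorem~\ref{thm:main1}. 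Uniqueness is immediate, since any $L$-relation $Q$ satisfying $\U_Q = \UU$ must obey $Q(x,y) = \U_Q(I_y)(x) = \UU(I_y)(x) = R(x,y)$.

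There is no substantive obstacle here; the argument is a routine dualization of Theorem~\ref{thm:main1}. The only point requiring careful bookkeeping is that \eqref{eq:multiple2} packages \emph{two} identities into one equation, and in the converse direction one must remember to extract them separately: set $a = 1$ to get the $\bigvee$-preservation property, and take a one-element index family to get the $\overline{a}$-homogeneity property, each time invoking $\TT_k \le \UU$ to discard the $\TT_k$-terms so that Lemma~\ref{lem:equations} applies.
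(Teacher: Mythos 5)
Your proposal is correct and follows essentially the same route as the paper: the forward direction uses $\UU \ge \TT_k$ to absorb the $\TT_k$-terms via $\UU = \UU \vee \TT_1 \vee \cdots \vee \TT_m$, and the converse collapses the right-hand side of \eqref{eq:multiple2} to $\overline{a} \wedge \bigvee_i \UU(A_i)$ and then repeats the construction $R(x,y) := \UU(I_y)(x)$ from Theorem~\ref{thm:main1}. The paper simply states that the $(\Leftarrow)$ direction ``is proved as in Theorem~\ref{thm:main1}''; your more explicit unpacking of the two identities needed for Lemma~\ref{lem:equations} is exactly what that reference entails.
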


\begin{proof} 
$(\Rightarrow)$ Since $\UU \geq \TT_k$, for all $k \leq m$, we have $\UU \leq \TT_1 \vee \cdots \vee \TT_m$ and hence
\[ \UU = \UU \vee \TT_1 \vee \cdots \vee \TT_m .\]
If there is an $L$-relation $R$ such that $\U A = \UU A$ for all $A \in \mathcal{F}_L(U)$, then    
\begin{align*}
  \UU \big (\overline{a} \wedge {\bigvee}_i A_i \big ) & = 
  \U  \big (\overline{a} \wedge {\bigvee}_i A_i \big)
  = \big (\overline{a} \wedge \U \big ({\bigvee}_i A_i \big) \big ) = \overline{a}\wedge {\bigvee}_i \U (A_i) \\
& =\overline{a} \wedge {\bigvee}_i \UU (A_i) = 
\overline{a} \wedge {\bigvee}_i (\UU(A_i) \vee \TT_1 (A_i) \vee \cdots \vee \TT_m(A_i) ).
\end{align*}

\noindent%
The direction $(\Leftarrow)$ is proved as in Theorem~\ref{thm:main1}. 
\end{proof}

Now we can write the following corollary of Theorem~\ref{thm:main2}. 

\begin{corollary}
Let $\UU$ and $\LL$ be dual $L$-fuzzy operators on $U$.
\begin{enumerate}[label = {\rm (\arabic*)}]

\item There exists a unique reflexive $L$-relation $R$ on $U$ such that $\UU$ and $\LL$ coincide with the upper and lower approximation operators of $R$, respectively, if and only if
\[
\UU(\overline{a} \wedge {\bigvee}_i A_i) = \overline{a} \wedge {\bigvee}_i \left (\UU(A_i) \vee  A_i) \right ) 
\]
for all $a \in L$ and $\{A_i\}_{i \in I} \subseteq \mathcal{F}_L(U)$. 

\item There exists a unique transitive $L$-relation $R$ on $U$ such that $\U$ and $\L$ coincide with the upper and lower approximation operators of $R$, respectively, if and only if
\[
\UU(\overline{a} \wedge {\bigvee}_i A_i) = \overline{a} \wedge {\bigvee}_i (\UU(A_i) \vee \UU (\UU (A_i)) ) 
\]
for all $a \in L$ and $\{A_i\}_{i \in I} \subseteq \mathcal{F}_L(U)$. 
\end{enumerate}
\end{corollary}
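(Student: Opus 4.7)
The plan is to obtain both parts as immediate specializations of Theorem~\ref{thm:main2}, paired with the relational characterizations furnished by Propositions~\ref{prop:reflexive} and \ref{prop:transitive}. In each case, I would select a single auxiliary operator $\TT_1$ so that the hypothesis ``$\UU\geq\TT_k$'' of Theorem~\ref{thm:main2} encodes precisely the property (reflexivity or transitivity) desired of the induced relation, and then read the conclusion off mechanically.

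For part (1), I would take $m=1$ and let $\TT_1$ be the identity operator $A\mapsto A$ on $\mathcal{F}_L(U)$. In the forward direction, if $R$ is a reflexive $L$-relation with $\U=\UU$, then Proposition~\ref{prop:reflexive} gives $A\leq\U A=\UU(A)$, so the hypothesis $\UU\geq\TT_1$ is fulfilled; Theorem~\ref{thm:main2} then delivers the displayed equation. For the converse, I would specialize the equation by taking the index set to be a singleton $\{i_0\}$ with $A_{i_0}=A$ and $a=1$, obtaining $\UU(A)=\UU(A)\vee A$, which forces $A\leq\UU(A)$. Thus $\UU\geq\TT_1$ holds, Theorem~\ref{thm:main2} produces a unique $L$-relation $R$ with $\U=\UU$, and since $A\leq\UU(A)=\U A$ for every $A$, Proposition~\ref{prop:reflexive} ensures $R$ is reflexive. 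The dual statement for $\LL$ follows because $\UU$ and $\LL$ are dual.

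For part (2) the same template applies with $\TT_1=\UU\circ\UU$. Transitivity of $R$, by Proposition~\ref{prop:transitive}, is equivalent to $\U\U A\leq\U A$; translating to $\UU$ this says $\UU(\UU(A))\leq\UU(A)$, so in the forward direction the hypothesis $\UU\geq\TT_1$ of Theorem~\ref{thm:main2} is met and the desired equation follows. Conversely, the singleton specialization described above extracts $\UU(\UU(A))\leq\UU(A)$ from the equation, so Theorem~\ref{thm:main2} yields a unique $R$ with $\U=\UU$; then $\U\U A\leq\U A$ together with Proposition~\ref{prop:transitive} gives transitivity.

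No substantive obstacle arises; the only step requiring a little care is extracting the pointwise domination ``$\UU\geq\TT_1$'' from the stated equation in each converse direction so that Theorem~\ref{thm:main2} can legitimately be invoked, and this is handled uniformly by the singleton-$a=1$ specialization above. Uniqueness of $R$ and the compatibility of $\LL$ with the lower approximation of $R$ are both inherited from Theorem~\ref{thm:main2} and the duality between $\UU$ and $\LL$.
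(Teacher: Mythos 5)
Your proposal is correct and follows essentially the same route as the paper: both parts are obtained from Theorem~\ref{thm:main2} with $m=1$ and $\TT_1$ equal to the identity (resp.\ $\UU\circ\UU$), combined with Proposition~\ref{prop:reflexive} (resp.\ Proposition~\ref{prop:transitive}). The only difference is that you make explicit a step the paper leaves implicit, namely extracting the standing hypothesis $\UU\geq\TT_1$ from the axiom via the singleton, $a=1$ specialization before invoking the converse direction of Theorem~\ref{thm:main2}; this is a welcome clarification rather than a deviation.
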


\begin{proof} (1) By Proposition~\ref{prop:reflexive}, $R$ is reflexive if and only if $A \leq \U A$ for any $A \in \mathcal{F}_L(U)$ from which the result follows by
Theorem~\ref{thm:main2} by setting $n = 1$ and $\TT_1(A) = A$. 

(2) By Proposition~\ref{prop:transitive}, $R$ is transitive if and only if $\U \U A \leq \U A$ for any $A \in \mathcal{F}_L(U)$ from which the result follows by
Theorem~\ref{thm:main2} by setting $n = 1$ and $\TT_1 = \U \U$. 
\end{proof}

Let $\UU$ be an $L$-fuzzy operator on $U$. Assume that each $\SS_j$, $1 \leq j \leq n$, is an $L$-fuzzy operator on $U$ such that $\UU \leq \SS_j$ and
suppose each $\TT_k$, $1 \leq k \leq m$ is an $L$-fuzzy operator on $U$ such that $\UU \geq \TT_k$. Now we have that
\begin{equation} \label{eq:combined}
  \UU = (\UU \wedge \SS_1 \wedge \cdots \wedge \SS_n) \vee (\TT_1 \vee \cdots \vee \TT_m).
\end{equation}
We can now write the following theorem. Its proof is clear, because $(\Rightarrow)$ part follows from \eqref{eq:combined},
and $(\Leftarrow)$ can be proved as in Theorem \ref{thm:main1}.

\begin{theorem} \label{thm:main3}
Let $\UU$ be an $L$-fuzzy operator on $U$ and let each $\SS_j$, $1 \leq j \leq n$ and $\TT_k$, $1 \leq k \leq m$, be $L$-fuzzy operators on $U$ such that
$\UU \leq \SS_j$ and $\UU \geq \TT_k$. Then there exists a unique $L$-fuzzy relation $R$ on $U$ such that $\U A = \UU(A)$ for all $A \in \mathcal{F}_L(U)$
if and only if
\[
\UU(\overline{a} \wedge {\bigvee}_i A_i) = \overline{a} \wedge {\bigvee}_i ( (\UU(A_i) \wedge \SS_1(A_i) \wedge \cdots \wedge \SS_n(A_i))
\vee (\TT_1(A_i) \vee \cdots \vee \TT_m(A_i)) )
\]
for $a \in L$ and $\{A_i\}_{i \in I} \subseteq \mathcal{F}_L(U)$. \qed 
\end{theorem}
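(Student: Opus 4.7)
The plan is to follow the template of Theorem~\ref{thm:main1} and Theorem~\ref{thm:main2}, using the identity \eqref{eq:combined} to package the extra operators. First I would record that, because $\UU \leq \SS_j$ for every $j$ and $\UU \geq \TT_k$ for every $k$, one has $\UU \wedge \SS_1 \wedge \cdots \wedge \SS_n = \UU$ and $\TT_1 \vee \cdots \vee \TT_m \leq \UU$, so that \eqref{eq:combined} holds identically. Applied pointwise at any $A_i$, this means the bracketed expression
\[
(\UU(A_i) \wedge \SS_1(A_i) \wedge \cdots \wedge \SS_n(A_i)) \vee (\TT_1(A_i) \vee \cdots \vee \TT_m(A_i))
\]
simplifies to $\UU(A_i)$.

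For the forward implication I would assume there is an $L$-relation $R$ with $\U A = \UU(A)$ for every $A \in \mathcal{F}_L(U)$. Combining Proposition~\ref{Prop:Basic}(3) with Lemma~\ref{lem:bar_prop}(2) yields $\U(\overline{a} \wedge {\bigvee}_i A_i) = \overline{a} \wedge {\bigvee}_i \U(A_i)$, and translating this identity to $\UU$ gives $\UU(\overline{a} \wedge {\bigvee}_i A_i) = \overline{a} \wedge {\bigvee}_i \UU(A_i)$. Rewriting each $\UU(A_i)$ on the right via \eqref{eq:combined} then produces exactly the identity claimed in the theorem.

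For the converse implication, the first move is the same simplification: by the observation above, the hypothesized equation is equivalent to $\UU(\overline{a} \wedge {\bigvee}_i A_i) = \overline{a} \wedge {\bigvee}_i \UU(A_i)$, which by Lemma~\ref{lem:equations} is equivalent to $\UU(A) = {\bigvee}_x (\overline{A(x)} \wedge \UU(I_x))$ for every $A \in \mathcal{F}_L(U)$. I would then define $R(x,y) := \UU(I_y)(x)$ and repeat verbatim the $(\Leftarrow)$ half of the proof of Theorem~\ref{thm:main1}: the expansion $A = {\bigvee}_x (\overline{A(x)} \wedge I_x)$ combined with the distributive properties of $\U$ (Proposition~\ref{Prop:Basic}(3), Lemma~\ref{lem:bar_prop}(2)) and the identity $\U(I_y)(x) = R(x,y) = \UU(I_y)(x)$ gives $\U A = \UU(A)$. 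Uniqueness is immediate: any $L$-relation $Q$ satisfying $\U_Q A = \UU A$ must obey $Q(x,y) = \U_Q(I_y)(x) = \UU(I_y)(x) = R(x,y)$.

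I do not anticipate a genuine obstacle here. The concern would be that the outer join with the $\TT_k$ and the inner meet with the $\SS_j$ might interfere nontrivially with the $\bigvee_i$ on the right-hand side, but the comparability hypotheses $\TT_k \leq \UU \leq \SS_j$ collapse the bracketed expression term-by-term to $\UU(A_i)$ before the outer join is taken, so the whole proof reduces to a direct application of the machinery already developed for Theorem~\ref{thm:main1}.
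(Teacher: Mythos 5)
Your proposal is correct and follows essentially the same route as the paper, which likewise derives the forward direction from \eqref{eq:combined} and handles the converse by collapsing the bracketed term to $\UU(A_i)$ and repeating the $(\Leftarrow)$ argument of Theorem~\ref{thm:main1}. The paper merely asserts these steps as ``clear,'' so your write-up supplies the same argument in more detail, with no gap.
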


We end this work by the following example of a single condition for a combination of certain relation types.

\begin{corollary}
Let $\UU$ and $\LL$ be dual $L$-fuzzy operators on $U$. There exists a reflexive, transitive, mediate, Euclidean and adjoint $L$-relation $R$ on $U$ such that $\UU$ and $\LL$
coincide with the upper and lower approximation operators of $R$,
respectively, if and only if
\[
 \UU(\overline{a} \wedge {\bigvee}_i A_i) 
 = \overline{a} \wedge {\bigvee}_i 
\big( 
(\UU(A_i) \wedge \UU (\UU(A_i)) \wedge \LL(\UU(A_i)) \wedge \UU(\LL(A_i)) )
\vee ( A_i \vee \UU(\UU(A_i)))
\big )
\]
for all $a \in L$ and $\{A_i\}_{i \in I} \subseteq \mathcal{F}_L(U)$. 
\end{corollary}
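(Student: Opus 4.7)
The plan is to present the final corollary as a direct specialization of Theorem~\ref{thm:main3}. Using Propositions~\ref{prop:reflexive}, \ref{prop:transitive}, \ref{prop:mediate}, \ref{prop:Eucl}, and \ref{prop:adjoint}, the five relational properties of $R$ translate (with $\UU = \U$ and $\LL = \L$) into the operator inequalities $\mathrm{id} \leq \UU$ (reflexive), $\UU\UU \leq \UU$ (transitive), $\UU \leq \UU\UU$ (mediate), $\UU \leq \LL\UU$ (Euclidean), and $\UU \leq \UU\LL$ (adjoint). I would accordingly set $\TT_1 := \mathrm{id}$, $\TT_2 := \UU\UU$, $\SS_1 := \UU\UU$, $\SS_2 := \LL\UU$, and $\SS_3 := \UU\LL$; under these choices the right-hand side of the equation in Theorem~\ref{thm:main3} coincides with the right-hand side of the claimed formula.

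The forward direction is then almost automatic: assuming such an $R$ exists, the five propositions cited above supply precisely the inequalities $\TT_k \leq \UU \leq \SS_j$ required as hypotheses of Theorem~\ref{thm:main3}, and substituting our chosen $\SS_j$ and $\TT_k$ into that theorem's formula gives exactly the displayed identity. For the converse, I would apply the reverse implication of Theorem~\ref{thm:main3} to obtain a unique $L$-relation $R$ with $\U A = \UU(A)$ for every $A \in \mathcal{F}_L(U)$; duality of $\UU$ and $\LL$ then gives $\L A = \LL(A)$, and the propositions convert the already secured operator inequalities back into the five relational properties of $R$.

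The main obstacle is the converse: before Theorem~\ref{thm:main3} can be invoked, the operator inequalities $\UU \leq \SS_j$ and $\UU \geq \TT_k$ themselves have to be read off from the formula, since they appear as hypotheses of that theorem. Specializing the formula to $a = 1$ and $I = \{A\}$, the requirement that the second disjunct $A \vee \UU\UU(A)$ lie below $\UU(A)$ produces $\TT_1 \leq \UU$ and $\TT_2 \leq \UU$ at once; substituting $A \mapsto \UU(A)$ in this specialization then yields $\UU \leq \UU\UU$ via the analogous disjunct $\UU(A) \vee \UU\UU\UU(A) \leq \UU\UU(A)$. Extracting the remaining inequalities $\UU \leq \LL\UU$ and $\UU \leq \UU\LL$ is the most delicate step, and I would attempt to obtain them by combining the already established identity $\UU\UU = \UU$ with further substitutions into the formula together with the duality between $\UU$ and $\LL$, possibly using non-singleton index sets $I$ and varying $a$ to exploit the full strength of the equation.
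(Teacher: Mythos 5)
Your reduction to Theorem~\ref{thm:main3} with $\SS_1=\UU\UU$, $\SS_2=\LL\UU$, $\SS_3=\UU\LL$, $\TT_1=\mathrm{id}$, $\TT_2=\UU\UU$ is exactly the specialization the paper intends, the forward direction is fine, and you are right that the real work is in the converse, where the operator inequalities must be \emph{read off} from the identity before Theorem~\ref{thm:main3} (whose proof genuinely uses $\UU\leq\SS_j$ and $\TT_k\leq\UU$) can be invoked. Your extraction of $\mathrm{id}\leq\UU$, $\UU\UU\leq\UU$, and then $\UU\leq\UU\UU$ by substituting $\UU(A)$ into the singleton instance is correct. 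The gap is the step you defer: the inequalities $\UU\leq\LL\UU$ and $\UU\leq\UU\LL$ \emph{cannot} be obtained from the displayed identity by any choice of $a$ and $\{A_i\}$. The obstruction is absorption: once $\mathrm{id}\leq\UU$ and $\UU\UU=\UU$ are in hand, the second disjunct $A_i\vee\UU(\UU(A_i))$ already equals $\UU(A_i)$, and the first disjunct is a meet bounded above by $\UU(A_i)$, so the whole bracket collapses to $\UU(A_i)$ regardless of what $\LL\UU$ and $\UU\LL$ do. Concretely, let $L=\{0,1\}$, $U=\{x,y\}$, and let $R$ be the crisp relation with $R(x,x)=R(y,y)=R(x,y)=1$ and $R(y,x)=0$; this $R$ is reflexive and transitive but not Euclidean, yet $\UU=\U$ and $\LL=\L$ satisfy the displayed identity (the bracket reduces to $\UU(A_i)$, and the identity becomes Proposition~\ref{Prop:Basic}(3) together with Lemma~\ref{lem:bar_prop}(2)). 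Since the relation representing $\UU$ is unique, no Euclidean (or adjoint) relation with these approximation operators can exist, so the converse of the corollary as stated is false.

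The structural point is that an identity of the form $\UU=(\UU\wedge\bigwedge_j\SS_j)\vee\bigvee_k\TT_k$ yields only $\TT_k\leq\UU$ and $\UU\leq(\bigwedge_j\SS_j)\vee\bigvee_k\TT_k$, never $\UU\leq\SS_j$; the latter is recovered only when there are no $\TT_k$'s (as in the pure Theorem~\ref{thm:main1} corollaries) or when $\bigvee_k\TT_k$ happens to lie below $\bigwedge_j\SS_j$. Here $\bigvee_k\TT_k$ is forced to equal $\UU$ itself, which is the worst case. So the difficulty you flagged as ``most delicate'' is in fact fatal: to make the statement true one must either carry the hypotheses $\UU\leq\LL\UU$ and $\UU\leq\UU\LL$ over from Theorem~\ref{thm:main3} as standing assumptions on $\UU$, or restructure the right-hand side so that the Euclidean and adjoint terms appear as outer meets (as in the Theorem~\ref{thm:main1} corollaries) rather than inside a join with $A_i\vee\UU(\UU(A_i))$. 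Your proof strategy is the right one for the forward implication and for the reflexive/transitive/mediate part of the converse, but no amount of further substitution will close the remaining step.
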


\section*{Some concluding remarks} 
In this work, we were able to solve the open problem presented in \cite{Pang2019} so that only the parts concerning serial and symmetric relations remain open. 

One obvious reason for this is that we do not have
a condition for symmetry or seriality which is of the suitable form so
that the theorems given in Section~\ref{Sec:Axiomatization}
could be applied. Interestingly, seriality and symmetry are also such that there are some issues how to generalize them as fuzzy relations, at least
in case of De~Morgan Heyting algebras.

\section*{Acknowledgements}

We thank the anonymous referees for their valuable remarks on our manuscript.

\section*{Compliance with Ethical Standards}

\paragraph*{Funding} The authors received no specific funding for this study.

\paragraph*{Conflict of interest}
Jouni J\"arvinen declares that he has no conflict of interest. 
Michiro Kondo declares that he has no conflict of interest.

\paragraph*{Ethical approval} This article does not contain any studies with human participants or animals performed by any of the authors.

\bibliographystyle{plain}
\bibliography{literature}
\end{document}